\documentclass[a4paper,11pt]{amsart}

\usepackage{amsfonts,amsmath,mathrsfs,amssymb,amsthm,amscd,latexsym,amstext,amsxtra}
\usepackage[usenames,dvipsnames]{color}

\usepackage[left=2cm,right=2cm, top=3cm, bottom=3cm]{geometry}

\linespread{1,2}

\usepackage[all,cmtip]{xy}
\usepackage{enumerate}
\xyoption{all}
\usepackage{srcltx}
\usepackage{textcomp}
\usepackage{parskip}

\usepackage{amsthm, amsmath, amssymb,latexsym} 
\usepackage{amsfonts,epsfig,amscd}
\usepackage[]{fontenc}
\usepackage{xy}
\usepackage{enumerate}
\usepackage[]{fontenc}
\usepackage[all]{xy}

\newtheorem{theorem}{Theorem}[section]
\newtheorem{lemma}[theorem]{Lemma}
\newtheorem{corollary}[theorem]{Corollary}
\newtheorem{proposition}[theorem]{Proposition}
\newtheorem{remark}[theorem]{Remark}
\newtheorem{definition}[theorem]{Definition}

%%%%%%%%%%%%%%%%%%%%%%%%%%%%%%%%%%%%%%%%%%%%%%%%%%%%%%%%%%%%%%%%%%%%%%%%%%%%%

\newcommand{\nc}{\newcommand}
\nc{\cH}{{\mathcal H}}
\nc{\cA}{{\mathcal A}}
\nc{\cG}{{\mathcal G}}
\nc{\cC}{{\mathcal C}}
\nc{\cD}{{\mathcal D}}
\nc{\cO}{{\mathcal O}}
\nc{\cI}{{\mathcal I}}
\nc{\cB}{{\mathcal B}}
\nc{\cY}{{\mathcal Y}}
\nc{\cK}{{\mathcal K}}
\nc{\cX}{{\mathcal X}}
\nc{\cS}{{\mathcal S}}
\nc{\cE}{{\mathcal E}}
\nc{\cF}{{\mathcal F}}
\nc{\cZ}{{\mathcal Z}}
\nc{\cQ}{{\mathcal Q}}
\nc{\cN}{{\mathcal N}}
\nc{\cP}{{\mathcal P}}
\nc{\cL}{{\mathcal L}}
\nc{\cM}{{\mathcal M}}
\nc{\cT}{{\mathcal T}}
\nc{\cW}{{\mathcal W}}
\nc{\cU}{{\mathcal U}}
\nc{\cJ}{{\mathcal J}}
\nc{\cV}{{\mathcal V}}
\nc{\bH}{{\mathbb H}}
\nc{\bA}{{\mathbb A}}
\nc{\bG}{{\mathbb G}}
\nc{\bC}{{\mathbb C}}
\nc{\bO}{{\mathbb O}}
\nc{\bI}{{\mathbb I}}
\nc{\bB}{{\mathbb B}}
\nc{\bY}{{\mathbb Y}}
\nc{\bK}{{\mathbb K}}
\nc{\bX}{{\mathbb X}}
\nc{\bS}{{\mathbb S}}
\nc{\bE}{{\mathbb E}}
\nc{\bF}{{\mathbb F}}
\nc{\bZ}{{\mathbb Z}}
\nc{\bQ}{{\mathbb Q}}
\nc{\bN}{{\mathbb N}}
\nc{\bP}{{\mathbb P}}
\nc{\bL}{{\mathbb L}}
\nc{\bM}{{\mathbb M}}
\nc{\bT}{{\mathbb T}}
\nc{\bW}{{\mathbb W}}
\nc{\bU}{{\mathbb U}}
\nc{\bD}{{\mathbb D}}
\nc{\bJ}{{\mathbb J}}
\nc{\bV}{{\mathbb V}}
\nc{\bbZ}{{\mathbb Z}}
\nc{\bR}{{\mathbb R}}
\nc{\fm}{{\mathfrak m}}
\nc{\longra}{{\longrightarrow}}

\nc{\fr}{{\rightarrow}}
\nc{\co}{{\nabla}}

\nc{\cu}{{\overlineline{\nabla}}}
\nc{\gmc}{\nabla}
\nc{\mtin}[1]{\mbox{{\tiny #1}}}
\nc{\rankp}[1]{r_{\mbox{{\tiny #1}}}}
\nc{\pder}[1]{\frac{\partial }{\partial #1}}

\DeclareMathOperator{\Ima}{Im}

\DeclareMathOperator{\rk}{rk}
\DeclareMathOperator{\im}{im}

\DeclareMathOperator{\rank}{rank}

\DeclareMathOperator{\id}{id}

\DeclareMathOperator{\Cliff}{Cliff}

%%%%%%%%%%%%%%%%%%%%%%%%%%%%%%%%%%%%%%%%%%%%%%%%%%%%%%%%%%%%%%%%%%%%%%%%%%%%%

\title{Families of curves with Higgs field of arbitrarily large kernel.} 																											
\makeatletter

\@addtoreset{equation}{section}
\makeatother
\author{V\'ictor Gonz\'alez-Alonso}

\author{Sara Torelli}

\thanks{S. Torelli was supported by PRIN 2015 Moduli spaces and Lie Theory, INdAM - GNSAGA, FAR 2016 (PV)  Variet\`a algebriche, calcolo algebrico, grafi orientati e topologici and a Riemann Fellowship (Leibniz Universit\"at Hannover).}

\keywords{}

\subjclass[2010]{14D06, 14C30, 32G20}

\date{\today}

\begin{document}	
	
\maketitle
	
\bigskip
\begin{abstract}
In this note we consider the flat bundle $\cU$ and the kernel $\cK$ of the Higgs field naturally associated to any (polarized) variation of Hodge structures of weight 1. We study how strict the inclusion $\cU\subseteq\cK$ can be in the geometric case. More precisely, for any smooth projective curve $C$ of genus $g\geq 2$ and any $r=0,\ldots,g-1$, we construct non-isotrivial deformations of $C$ over a quasi-projecive base such that $\rk\cK=r$ and $\rk\cU\leq\frac{g+1}{2}$.
\end{abstract}

\section{Introduction and notations}

The Hodge bundle $\cH^{1,0}=f_\ast\omega_f$ of a one-parametric semistable family $f\colon S\rightarrow B$ of complex projective curves of genus $g$ (or more genenerally, of a polarized variation of Hodge structures of weight one) carries two natural vector subbundles: the flat unitary summand $\cU$ (cf. Fujita and Koll\'ar decompositions \cite{Fuj78b,Kol86}) and the kernel $\cK$ of the associated Higgs field (see Section 3 for more details). By definition there is an inculsion $\cU\subseteq\cK$, which must be an equality if $\cK=\cH^{1,0}$. Besides this trivial case, it is not difficult to explicitly construct (non-geometric) variations of Hodge structure over a disk where both $\rk\cU$ and $\rk\cK$ can be chosen arbitrarily (satisfying $\rk\cU\leq\rk\cK<g$). However, it is not clear whether this construction can provide geometric variations of Hodge structure, i.e. arising from a semistable family of curves, or on the contrary such geometric variations have some restrictions on the ranks of $\cU$ and $\cK$. In particular it is not clear when the equality $\cU=\cK$ holds in the geometric case.

The main result of this note is that $\cK$ can have any rank (between $0$ and $g-1$) also in geometric cases, with families containing an arbitrarily chosen curve, and even over (quasi-)projective base. If moreover the chosen curve has simple jacobian variety, the family can be chosen with $\cU=0$. More precisely, we prove:

\begin{theorem} \label{thm:main}
Let $C$ be any smooth projective curve of genus $g\geq2$. Then for any $0\leq r<g$ there is $f\colon\mathcal{C}\rightarrow B$, a non-isotrivial semistable one-dimensional family of deformations of $C$ over a projective base $B$, such that $\rk\mathcal{K}=r$ and $\rk\mathcal{U}\leq\frac{g+1}{2}$.
\end{theorem}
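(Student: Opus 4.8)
The plan is to realise $C$ as a simply branched cover of $\mathbb{P}^1$ and then to deform it by letting exactly $g-r$ of the branch points move, so that the Kodaira--Spencer class of the resulting family becomes, at a general member, a sum of $g-r$ Schiffer variations; the kernel of the associated Higgs field is then forced to have dimension $r$.

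\emph{Construction of the family.} Choose a simply branched morphism $\pi\colon C\to\mathbb{P}^1$ of degree $d$ (such a cover exists for every sufficiently large $d$), with ramification points $p_1,\dots,p_b$ lying over pairwise distinct branch points $t_1,\dots,t_b\in\mathbb{P}^1$, where $b=2(d+g-1)$; by choosing $\pi$ appropriately we may assume that some $g-r$ among the $p_i$ --- say $p_1,\dots,p_{g-r}$ --- impose independent conditions on the canonical system $|\omega_C|$. Now move $t_1,\dots,t_{g-r}$ along a general affine curve through $(t_1,\dots,t_{g-r})$ inside the configuration space of $g-r$ points of $\mathbb{P}^1\setminus\{t_{g-r+1},\dots,t_b\}$, keeping $t_{g-r+1},\dots,t_b$ fixed, and lift this curve to the corresponding Hurwitz space through the point representing $\pi$. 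This produces a smooth quasi--projective curve $B^{\circ}$ and a family $f^{\circ}\colon\mathcal{C}^{\circ}\to B^{\circ}$ of smooth projective curves of genus $g$ having $C$ as the fibre over some $b_0$, and non-isotrivial because the cross-ratios of the branch points vary.

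\emph{Computing $\rk\mathcal{K}$.} For $b\in B^{\circ}$ the Higgs field, paired with a tangent vector of $B^{\circ}$, is cup product with the Kodaira--Spencer class $\rho_b\in H^1(C_b,T_{C_b})$; under Serre duality its kernel is the radical of the symmetric bilinear form $(\alpha,\beta)\mapsto\langle\rho_b,\alpha\beta\rangle$ on $H^0(C_b,\omega_{C_b})$, where $\alpha\beta\in H^0(C_b,\omega_{C_b}^{\otimes2})$. Since only $t_1,\dots,t_{g-r}$ move, the Kodaira--Spencer map sends the tangent vector of $B^{\circ}$ to $\rho_b=\sum_{i=1}^{g-r}c_i(b)\,\xi_{p_i(b)}$, a combination of Schiffer variations at the moving ramification points, with all $c_i(b)\neq0$ for general $b$. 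A local computation identifies the above form with $(\alpha,\beta)\mapsto\sum_{i=1}^{g-r}c_i(b)\,\alpha(p_i(b))\beta(p_i(b))$, whose radical is $\{\alpha\in H^0(\omega_{C_b}):\alpha(p_i(b))=0\text{ for all }i\}=H^0\!\bigl(\omega_{C_b}(-p_1(b)-\dots-p_{g-r}(b))\bigr)$. For general $b$ the points $p_i(b)$ still impose independent conditions on $|\omega_{C_b}|$, so this space has dimension $g-(g-r)=r$; hence $\rk\mathcal{K}=r$.

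\emph{Projective base and the flat part.} Replace $B^{\circ}$ by its smooth compactification and apply semistable reduction: after a finite base change and blow--ups one obtains a semistable family $f\colon\mathcal{C}\to B$ over a smooth projective curve $B$, still non-isotrivial and still containing $C$ as a fibre. These operations do not change the generic rank of $\ker\theta$, so $\rk\mathcal{K}=r$ is unaffected. It remains to bound $\rk\mathcal{U}$: the flat unitary summand corresponds to the fixed (isotrivial) part of the family of Jacobians, and one has to show that the moving branch points force this fixed part to have dimension at most $\tfrac{g+1}{2}$ --- and to vanish altogether when $\mathrm{Jac}(C)$ is simple. This is the delicate point: it requires controlling the monodromy of the Hurwitz family, and of the sub--Hodge--structures that could survive the base change in the semistable reduction, well enough to rule out a large constant sub--variation; the bound $\tfrac{g+1}{2}$, rather than $0$, is precisely what is needed to accommodate curves with non--simple Jacobian.
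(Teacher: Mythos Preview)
Your construction of the family and your computation of $\rk\mathcal{K}$ are essentially the same as the paper's: both realise $C$ as a simply branched cover of $\mathbb{P}^1$, move $g-r$ of the branch points, and identify the Kodaira--Spencer class with a combination of Schiffer variations at the corresponding ramification points. Your direct computation of the radical of the bilinear form is exactly the content of the paper's Lemma~\ref{Lem-BoundKer} in the rigid case, so on this part you are fine. (One point you glide over: that a simply branched cover can be chosen with prescribed ramification points in general position. The paper proves this explicitly by embedding $C$ via a linear system of degree $\geq 5g+3$ and projecting from a suitable codimension-two subspace; you should either do this or justify why a \emph{general} simply branched cover has ramification points imposing independent conditions on $|\omega_C|$.)

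The genuine gap is the bound $\rk\mathcal{U}\leq\frac{g+1}{2}$, which you yourself flag as ``the delicate point'' and then do not prove. Your suggested line --- control the monodromy of the Hurwitz family and rule out a large constant sub-variation --- is not how the paper proceeds, and it is not clear how to make it work. The paper's argument (Lemma~\ref{lem-relrigRanks}) is of a completely different nature: because the family is minimally supported on a relatively \emph{rigid} divisor, flat sections of $\mathcal{U}\subseteq\mathcal{K}$ lift to closed holomorphic $1$-forms on the total space that pairwise wedge to zero; a relative Castelnuovo--de Franchis theorem (the ``Tubular Castelnuovo--de Franchis'' of \cite{GST17}) then produces, for general $b$, a non-constant map $C_b\to C'$ onto a \emph{fixed} curve of genus $g(C')\geq\rk\mathcal{U}$, and Riemann--Hurwitz forces $\rk\mathcal{U}\leq\frac{g+1}{2}$ unless the family is isotrivial. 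None of this is about monodromy; the rigidity of the supporting divisor is what makes the lifting and wedging argument go through. Without this (or an equivalent) mechanism your proof of the theorem is incomplete.
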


\begin{corollary} \label{cor:main}
If $C$ is a smooth projective curve of genus $g\geq2$ with simple jacobian variety, then for $0<r<g$ there is a deformation as in Theorem \ref{thm:main} with $\mathcal{U}=0$, hence $\mathcal{U}\subsetneq\mathcal{K}$.
\end{corollary}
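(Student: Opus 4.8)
The plan is to deduce Corollary~\ref{cor:main} from Theorem~\ref{thm:main} by a short Hodge-theoretic observation: if $C$ has simple Jacobian, then \emph{every} deformation $f\colon\mathcal C\to B$ of $C$ with $\rk\mathcal U<g$ automatically satisfies $\mathcal U=0$. Granting this, one fixes $0<r<g$, applies Theorem~\ref{thm:main} to obtain a non-isotrivial semistable family $f\colon\mathcal C\to B$ of deformations of $C$ with $\rk\mathcal K=r$ and $\rk\mathcal U\le\frac{g+1}{2}$, observes that $\frac{g+1}{2}<g$ for $g\ge 2$, concludes $\mathcal U=0$ from the claim, and notes that $0<r=\rk\mathcal K$ then forces $\mathcal U=0\subsetneq\mathcal K$, which is the assertion.

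To establish the observation I would argue as follows. Recall from Section~3 (Fujita--Koll\'ar) that $\mathcal U\subseteq f_*\omega_f=\mathcal H^{1,0}$ is a flat subbundle on which the Higgs field vanishes, and more precisely that it is the $(1,0)$-piece of a polarized sub-variation of rational Hodge structure $\bU\subseteq R^1f_*\bQ$, with $\bU^{1,0}=\mathcal U$ and $\bU^{0,1}=\overline{\mathcal U}$, so that $\rk_{\bQ}\bU=2\rk_{\bC}\mathcal U$. Choose $b_0\in B$ with fibre $(\mathcal C)_{b_0}\cong C$; then $\bU_{b_0}\subseteq H^1(C;\bQ)$ is a rational sub-Hodge structure of weight one whose $(1,0)$-part is $\mathcal U_{b_0}$, of dimension $\rk_{\bC}\mathcal U$. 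By the classical correspondence between rational sub-Hodge structures of $H^1(C;\bQ)$ and abelian subvarieties of $JC$ up to isogeny (Poincar\'e reducibility), $\bU_{b_0}$ gives rise to an abelian subvariety $B_0\subseteq JC$ with $\dim B_0=\rk_{\bC}\mathcal U$. If $\mathcal U\neq 0$, then $1\le\dim B_0=\rk_{\bC}\mathcal U<g=\dim JC$, so $B_0$ is a nontrivial proper abelian subvariety of $JC$, contradicting its simplicity; hence $\mathcal U=0$. (Alternatively one may avoid the bound $\rk\mathcal U<g$: were $\mathcal U=\mathcal H^{1,0}$, then $\mathcal K=\mathcal H^{1,0}$ as well, the Higgs field would vanish identically and $f$ would be isotrivial, against Theorem~\ref{thm:main}; so again $0\subsetneq\bU_{b_0}\subsetneq H^1(C;\bQ)$ and $B_0$ is nontrivial and proper.)

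I expect the only genuinely delicate point to be the input quoted from Section~3, namely that the flat unitary summand $\mathcal U$ is the Hodge bundle of a \emph{rational} sub-variation of Hodge structure with vanishing Higgs field (rather than merely a flat $\bC$-subbundle). This should follow from Deligne's semisimplicity of $R^1f_*\bQ$ together with the identification, via the Corlette--Simpson correspondence, of sub-variations with zero Higgs field with unitary flat sub-variations: decomposing $R^1f_*\bQ$ into rational isotypic summands, $\mathcal U$ is the $(1,0)$-part of the sum of those summands carrying unitary monodromy, and this sum is in particular defined over $\bQ$ and conjugation-stable. Everything else in the argument is formal, so this is where I would concentrate the care; once it is in place, the rank inequality $\rk\mathcal U\le\frac{g+1}{2}<g$ from Theorem~\ref{thm:main} and the simplicity of $JC$ do the rest.
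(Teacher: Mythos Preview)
Your overall strategy—produce from a nonzero $\cU$ an abelian subvariety of $J(C)$ and then invoke simplicity—is the same as the paper's. The divergence, and the gap, is in how you pass from $\cU\neq0$ to an honest abelian subvariety.

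The paper does \emph{not} claim that $\cU\oplus\overline{\cU}$ underlies a rational sub-variation for an arbitrary family. Instead it uses a feature specific to the families built in Theorem~\ref{thm:main}: they are minimally supported on a relatively rigid divisor, so Lemma~\ref{lem:relrigmonodromy} (resting on \cite{PT}) shows that $\cU$ has \emph{finite} monodromy. After a finite \'etale base change $\cU$ becomes trivial; the monodromy-invariant part of $R^1f_*\bQ$, which by Deligne's fixed-part theorem is a rational sub-Hodge structure, then has $(1,0)$-part equal to $\cU$, and this produces the abelian subvariety of $J(C)$. Simplicity finishes the argument.

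Your proposed justification for rationality—decompose $R^1f_*\bQ$ into $\bQ$-isotypic summands and collect those with unitary monodromy—does not work as stated. A $\bQ$-irreducible summand $N_j$ can split over $\bC$ into irreducible local systems some of which are purely of type $(1,0)$ or $(0,1)$ (hence lie in $\cU\oplus\overline{\cU}$) while others carry both Hodge types and thus nonzero Higgs field; nothing in Deligne semisimplicity or in the Corlette--Simpson correspondence forces the Hodge type to be constant along a $\bQ$-orbit of $\bC$-irreducibles. In that situation $\cU$ is not the $(1,0)$-part of any sum of $\bQ$-isotypic pieces, and at the fibre $C$ the subspace $\cU_{b_0}\oplus\overline{\cU_{b_0}}\subset H^1(C,\bC)$ need not be defined over $\bQ$, so no abelian subvariety is produced and the simplicity hypothesis cannot be applied. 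The finite-monodromy step is exactly what the paper supplies to close this gap; without it your stronger claim (that any deformation of a simple-Jacobian curve with $\rk\cU<g$ has $\cU=0$) remains unproven.
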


Our motivation to study this question stems from the classification of fibred (irregular) surfaces. Indeed, in the recent work \cite{GST17} an upper bound for the rank of $\cU$ is obtained, depending on geometric invariants of the fibres like their genus and the general Clifford index, generalizing a previous result of \cite{BGN_Xiao_2015} on the relative irregularity. A closer look at the proof of that result shows that in some cases the upper bound is actually a bound for the rank of $\cK$. Therefore a better understanding of the inclusion $\cU\subseteq\cK$ could lead to improvements of the main result in \cite{GST17}. We notice that any $\cK$ bigger than $\cU$ is seminegative by a result of \cite{Z00}, highlighting how wild the behaviour of the Kodaira-Spencer map can be also in cases where the local Torelli theorem holds, and therefore adding importance to the study of those ranks as new numerical invariants.

A second possible application is the so called Coleman-Oort conjecture: roughly speaking, for high enough genus, the Torelli locus in $\cA_g$ should not contain positive-dimensional Shimura subvarieties. In \cite{CLZ16}, Chen, Lu and Zuo proved that, if the variation of Hodge structure associated to a Shimura curve $X\subset\cA_g$ has flat unitary bundle $\cU$ of $\rk\cU\geq\frac{4g+2}{5}$, then $X$ is not generically contained in the Torelli locus (i.e. $X$ intersects the Torelli locus at most in isolated points). Recently in \cite{CLZ18} the same authors proved that the same holds if $\rk\cU\leq\frac{2g-22}{7}$. Therefore Shimura curves in the Torelli locus cannot have $\cU$ of too big or too small rank. Since both bundles $\cU$ and $\cK$ for a curve $X$ in $\cA_g$ reflect the local structure of $X\subset\cA_g$, there could be a similar statement with $\rk\cK$ instead of $\rk\cU$. The relation between $\cU$ and $\cK$ with Massey products has also recently been used by Ghigi, Pirola and the second author in \cite{GPT19} to prove that any Shimura subvariety generically contained in the Torelli locus can have dimension at most $\frac{7g-2}{3}$. Altogether this supports the idea that a better understanding of the inculsion $\cU\subseteq\cK$ might lead to new insights for the Coleman-Oort conjecture.

Let us devote a few words to our techniques. Our main tool to estimate the ranks of $\cU$ and $\cK$ is Lemma \ref{lem-relrigRanks}, which leads us to focus on families that are supported on relatively rigid divisors (see Definition \ref{df:supp-div}). Roughly speaking, on a general fibre the first order infinitesimal deformation is described by a rigid divisor of the fibre, and these divisors glue along the family. However, supporting divisors are not canonically definable, not even the minimal ones. Indeed, any divisor of degree greater than $2g-2$ supports every deformation (e.g.$D=(2g-2)p$ for any point $p$), and thus any deformation has a minimal supporting divisor concentrated at any given point (with multiplicity). Nonetheless, for families obtained by deforming a branched finite covering, the theory developed by Horikawa in \cite{H73} allows to construct some natural minimal supporting divisors (see Lemma \ref{lem:Hori-divisor}).

At first sight one might expect that $\cU$ and $\cK$ coincide locally, and that a strict inequality $\cU\subsetneq\cK$ would be caused by monodromy if the base is not simply connected. But this is false, as the local nature of Lemma \ref{lem-relrigRanks} shows. This fact is strongly highlighted in Theorem \ref{thm:localfam} where some ad hoc local examples have been constructed.  We notice that the set of rigid divisors of a given degree of a curve is open and Zariski-dense in the Picard variety of the fixed degree, hence many families can be constructed in this way. 

The proof of Theorem \ref{thm:main} follows this line. We take a smooth projective curve $C$ of genus $g$ and for any $1\leq d\leq g$ we construct a ramified covering $C\rightarrow\mathbb{P}^1$ suitably ramified on a chosen rigid divisor $D\subset C$ of degree $d$. Then we consider a family of coverings obtained by moving $D$, which can be extended to a quasi-projective base. At this point the proof concludes as a straightforward application of Lemmas \ref{lem:Hori-divisor} and \ref{Lem-BoundKer}. 

The proof of Corollary \ref{cor:main} follows immediately by Theorem \ref{thm:main} because the monodromy of the flat bundle $\cU$ of those families is finite by a result of \cite{PT}. Thus a non vanishing $\cU$ would define a subvariety of the jacobian of a general fibre, contradicting its simplicity.

Although the constructions as given in the proof are already very explicit, in Section \ref{sec:cyclicexamples} we study in more detail some deformations of cyclic coverings inspired by the study on $\cU$ done in \cite{CatDet_Vector_2016,Lu18}. Our interest on these examples is motivated by the fact the corresponding $\cU$ has infinite monodromy, rank bigger than $(g+1) /2$ and moreover $\cK=\cU$, hence they look very different from our case where $\cU$ is smaller than $\cK$, has rank less than $(g+1) /2$ and finite monodromy. This kind of examples has been intensively studied with different approaches and objectives (see \cite{M10,Pet16,CFG15}). We notice that they are also interesting in our study since they admit a non vanishing flat bundle, which does not occur for a very general curve (see \cite[Theorem 3.13]{FGP}) and therefore we spend a few lines rephrasing some of their results using our tools.

The paper is organized as follows. In Section \ref{sec:horidiv} we relate the theories of supporting divisors and deformations of maps and prove Lemma \ref{lem:Hori-divisor}, which constructs a natural minimal supporting divisor by means of Horikawa's theory. In Section \ref{sec:rigidcase} we analyse the case of rigid supporting divisors (Lemma \ref{Lem-BoundKer}) and construct local families with any $\rk\cK$ (Theorem \ref{thm:localfam}). In Section \ref{sec:cyclicexamples} we consider in more detail deformations of cyclic coverings and compare them to those of \cite{CatDet_Vector_2016,Lu18}. 
Finally in section \ref{sec:pf:main} we prove Theorem \ref{thm:main} and Corollary \ref{cor:main}.

{\bf Acknowledgements:} We want to thank Gian Pietro Pirola, Lidia Stoppino, Xin Lu and Anand Deopurkar for some very fruitful discussions and enlightening ideas. Sara Torelli also thanks the Riemann Center and the Institute of Algebraic Geometry of Leibniz Universit\"at Hannover for their warm hospitality and support during her stay as Riemann Fellow which originated this work.

\section{Horikawa's deformation theory and supporting divisors}\label{sec:horidiv}
In this section we relate the theories of supporting divisors (see \cite{BGN_Xiao_2015}) and of deformation of maps (see \cite{H73}) to produce a somehow canonical supporting divisor for families of morphisms, which we use to estimate the ranks of $\mathcal{U}$ and $\mathcal{K}$. Let $f: \cC \rightarrow B$ be a smooth family of projective curves of genus $g\geq 2$ over a disk $B$.

\begin{definition}[Supporting divisors] \label{df:supp-div}
Let $C$ be a smooth projective curve and $\xi \in H^1(C,T_C)$ a first-order infinitesimal deformation of $C$.
An effective divisor $D$ in $C$ is a {\em supporting divisor} of $\xi$ if
\begin{equation}\label{eq:infinitesimal-supp-div}
\xi \in \ker\left(H^1\left(C,T_C\right) \longra H^1\left(C,T_C\left(D\right)\right)\right) = \im\left(H^0\left(D,T_C(D)_{|D}\right) \longra H^1\left(C,T_C\right)\right).
\end{equation}
A {\em minimal} supporting divisor is a supporting divisor $D$ with the extra property that any effective strict subdivisor $D' < D$ does not support $\xi$.

A {\em (minimal) supporting divisor} of a smooth family of curves $f: \cC \rightarrow B$ is an effective divisor $\cD \subset \cC$ such that on a general fibre $C_b = f^{-1}(b)$ the restriction $D_b = \cD_{|C_b}$ is a (minimal) supporting divisor of the infinitesimal defomation $\xi_b$ of $C_b$ induced by $f$.
\end{definition}

In the case of a family, up to shrinking $B$ we can always assume that a supporting divisor consists of sections of $f$ (possibly with coefficients).

For any divisor $D$ on a curve $C$ we denote by $r\left(D\right)=\dim\left|D\right|=h^0\left(C,\cO_C\left(D\right)\right)-1$ the dimension of its complete linear series, and by $\Cliff\left(D\right)=\deg D-2r\left(D\right)$ its Clifford index.

The following result is our basic tool to estimate the ranks of $\mathcal{U}$ and $\mathcal{K}$ in terms of invariants of a supporting divisor. 

\begin{lemma}[{\cite[Lemma 2.3 and Thm 2.4 in]{BGN_Xiao_2015} or \cite[Theorem 2.9]{GST17}}]\label{Lem-BoundKer}
Let $C$ be a projective curve of genus $g$, $\xi \in H^1(C,T_C)$ a first-order infinitesimal deformation and $\cup\xi: H^0(C,\omega_C) \rightarrow H^1(C,\mathcal{O}_C)$ the map induced by cup-product.
\begin{enumerate}
\item If $D$ is a divisor (in $C$) supporting $\xi$, then $H^0(C,\omega_C(-D))\subseteq \ker(\cup\xi)$ and hence
$$\dim \ker(\cup\xi) \geq g - (\deg D-r(D)).$$
\item If further $D$ supports $\xi$ {\em minimally}, then
$$\dim\ker(\cup\xi) \leq g - (\deg D - 2r(D)) = g - \Cliff(D).$$
\end{enumerate}
\end{lemma}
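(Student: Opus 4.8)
The plan is to analyze the two short exact sequences arising from the divisor $D$ and to identify the image of the cup-product map $\cup\xi$ with the connecting map in a long exact sequence twisted by $D$. First I would recall the standard reformulation: for $\omega_C\cong\Omega^1_C$, the cup-product pairing $\cup\xi\colon H^0(C,\omega_C)\to H^1(C,\cO_C)$ is, up to Serre duality, the same datum as the map $H^0(C,T_C^\vee\otimes\cdot)$ dual to multiplication by $\xi$; more usefully, $\xi$ defines an extension class and $\ker(\cup\xi)$ consists of those holomorphic $1$-forms $\alpha$ whose product with $\xi$ (an element of $H^1(C,T_C)$, so $\alpha\cup\xi\in H^1(C,\cO_C)$) vanishes. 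The key observation for part (1) is the commutative diagram relating the twist by $D$: tensoring the exact sequence $0\to\cO_C\to\cO_C(D)\to\cO_D(D)\to 0$ with $T_C$ gives
\begin{equation*}
H^0\bigl(D,T_C(D)_{|D}\bigr)\xrightarrow{\ \delta\ } H^1(C,T_C)\longra H^1(C,T_C(D)),
\end{equation*}
so that $D$ supporting $\xi$ means exactly $\xi=\delta(s)$ for some section $s$. Dually, $H^0(C,\omega_C(-D))\hookrightarrow H^0(C,\omega_C)$, and cupping with $\xi=\delta(s)$ factors through the pairing of $H^0(C,\omega_C(-D))$ with $H^0(D,T_C(D)_{|D})$; but $\omega_C(-D)\otimes T_C(D)_{|D}=\omega_C\otimes T_C\otimes\cO_D=\cO_D$-valued only after restriction, and the composite lands in $H^1$ of a sheaf supported away from $D$ in the relevant sense, forcing $\alpha\cup\xi=0$ for every $\alpha\in H^0(C,\omega_C(-D))$. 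This gives $H^0(C,\omega_C(-D))\subseteq\ker(\cup\xi)$, and since $h^0(\omega_C(-D))\geq g-\deg D+h^0(\cO_C(D))-1=g-(\deg D-r(D))$ by Riemann--Roch, the inequality in (1) follows.

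For part (2) the point is that minimality upgrades the inclusion to an equality in the "rigid part''. I would argue by contradiction: suppose $\dim\ker(\cup\xi)>g-\Cliff(D)=g-\deg D+2r(D)$. Consider the linear system $|D|$ and its base-point-free part; the forms in $\ker(\cup\xi)$ that are \emph{not} in $H^0(\omega_C(-D))$ correspond, via the pairing with $H^0(D,T_C(D)_{|D})$ and the description of $\delta(s)=\xi$, to a nonzero relation forcing $s$ to be a section of a strictly smaller subsheaf — concretely, one extracts an effective $D'<D$ with $\xi\in\im\delta'$ for the analogous connecting map of $D'$, contradicting minimality. The bookkeeping is the comparison $h^0(\omega_C(-D))=g-\deg D+r(D)+1$ exactly when $|D|$ has the expected dimension (no special behaviour of $\omega_C(-D)$), and $\dim\ker(\cup\xi)-h^0(\omega_C(-D))$ measures precisely how much of $H^0(D,T_C(D)_{|D})/(\text{image landing in }\xi)$ can be killed; minimality says this defect is at most $r(D)$, whence $\dim\ker(\cup\xi)\leq (g-\deg D+r(D))+r(D)=g-\Cliff(D)$.

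The main obstacle, and where I would spend the most care, is the precise identification in part (2): making rigorous the passage from "$\alpha\in\ker(\cup\xi)\setminus H^0(\omega_C(-D))$'' to "$\xi$ is supported on a proper subdivisor $D'<D$''. This requires understanding the cokernel of $H^0(C,\omega_C(-D))\to\ker(\cup\xi)$ as a space of sections that, when paired against the section $s$ with $\delta(s)=\xi$, impose enough vanishing to descend $s$ to $\cO_C(D')$ for some $D'<D$; equivalently one must control the map $H^0(D,T_C(D)_{|D})\to\ker(\cup\xi)^\vee$ and show its kernel corresponds to $H^0(D',T_C(D')_{|D'})$-type data for smaller $D'$. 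This is where the dimension count $\deg D-2r(D)$ enters non-trivially rather than just $\deg D-r(D)$, and it is exactly the content of the cited results \cite[Lemma 2.3 and Thm 2.4]{BGN_Xiao_2015} and \cite[Theorem 2.9]{GST17}; I would either reproduce their local-to-global argument via the above twisted sequences or invoke them directly.
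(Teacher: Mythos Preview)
The paper does not give its own proof of this lemma: it is stated with attribution to \cite[Lemma~2.3 and Thm~2.4]{BGN_Xiao_2015} and \cite[Theorem~2.9]{GST17} and used as a black box. So there is no in-paper argument to compare your proposal against; your final option of ``invoke them directly'' is precisely what the paper does.

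That said, a quick comment on your sketch. Your plan for part~(1) is essentially the right one and can be made clean: any $\alpha\in H^0(C,\omega_C(-D))$ defines a sheaf map $T_C(D)\to\cO_C$, hence a factorization $H^1(C,T_C)\to H^1(C,T_C(D))\to H^1(C,\cO_C)$ of $\cup\alpha$, and $\xi$ dies in the middle term by hypothesis. (Incidentally, Riemann--Roch gives $h^0(\omega_C(-D))=g-(\deg D-r(D))$ on the nose, not just an inequality.) For part~(2) you correctly isolate the real difficulty: passing from an $\alpha\in\ker(\cup\xi)\setminus H^0(\omega_C(-D))$ to a strictly smaller supporting divisor. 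Your heuristic ``minimality says this defect is at most $r(D)$'' is not yet an argument; the cited references carry out this step by a careful analysis (in \cite{GST17} via adjoint images / Massey-type products), and your proposal rightly defers to them here.
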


We notice that in particular, when a minimal supporting divisor $D$ is rigid, the estimates in Lemma \ref{Lem-BoundKer} lead to the equality $\dim \ker \cup \xi =g-\deg D$. 

This result is useful because $\mathcal{U} \subseteq \mathcal{K}$ and at a general $b \in B$ the Higgs field
$$\theta(b): f_*\omega_{\mathcal{C}/B} \otimes \mathbb{C}(b) \rightarrow R^1f_*\mathcal{O}_\cC \otimes \mathbb{C}(b) \otimes T_{B,b}^{\vee}$$
coincides with $\cup \xi_b$ (up to non-zero scalar, depending on the choice of an isomorphism $T_{B,b}^{\vee} \cong \mathbb{C}$).

In order to apply Lemma \ref{Lem-BoundKer} one has to construct a divisor minimally supporting $f$, but unfortunately such divisors are not unique and in general there is no canonical choice. In the case of  families of curves $f$ arising as deformations of morphisms onto a fixed curve, Horikawa's theory as developed in \cite{H73} gives a natural way to construct a supporting divisor using the so-called Horikawa characteristic class. 

We shortly recall the construction of the characteristic map and the relation to the Kodaira-Spencer class. Let $C'$ be a smooth projective curve. A family of morphisms of curves onto $C'$ is a morphism $(f,\Phi):\cC\to B\times C'$ such that $f:\cC\to B$ is a family of curves, and for any $b \in B$ the restriction $\pi_b=\Phi\circ i_b: C_b\to C'$ given by the inclusion $i_b:C_b = f^{-1}(b)\hookrightarrow\cC$ is a non constant morphism of curves. For any fixed $b\in B$, the morphism $\pi=\pi_b:C=C_b\to C'$ defines a short exact sequence 
\begin{equation}\label{ses-Hori}
\xymatrix@!R{
{0}& { T_{C}}& 			{\pi^\ast T_{C'}} & {{\mathcal{T}}_{\pi}} &  {0.}
\ar"1,1";"1,2"\ar"1,2";"1,3"^<<<<{d\pi}\ar"1,3";"1,4"^{p_{\pi}}\ar"1,4";"1,5"
}
\end{equation}
We can fix local coordinate systems $(\cU_i, (z_i,t))$ of $\cC$ and $(V_i,w_i)$ of $C'$ by choosing Stein open sets such that $\Phi(\cU_i)\subset V_i$ and where $t$ is the pull-back of a local coordinate  of $B$ around $b$. We denote by $\Phi_i$ the local expression of $\Phi$ with respect to these coordinate systems, i.e. $w_i=\Phi_i(z_i,t)$, and define a $0$-cochain of $ \pi^\ast T_{C'}$ by setting
$$s_i= \left(\frac{\partial \Phi_i}{\partial t}\right)_{|t=b}\frac{\partial}{\partial w_i}$$
on $U_i=\cU_i\cap C$. By applying $p_{\pi}$ we obtain a $0$-cochain of $\mathcal{T}_{\pi}$ given by $\tau_i=p_{\pi}\left(s_i\right)$ on $U_i$. These sections turn out to agree in the intersections $U_i \cap U_j$, giving rise to a section $\tau \in H^0(C, {\mathcal{T}}_{\pi})$ that is called {\em characteristic class} of $\pi$. The characteristic map 
$$\tau: T_bB \to H^0(C, {\mathcal{T}}_{\pi})$$
is the map that sends the generator $\frac{\partial}{\partial t}\in T_bB$ to the characteristic class $\tau \in H^0(C, {\mathcal{T}}_{\pi})$ defined as above.
By \cite[Proposition 1.4]{H73}, the characteristic map factors through the Kodaira-Spencer map $KS: T_0B \to H^1(C,T_C)$ as in 
\begin{equation}\label{dia-Hori}
\xymatrix@!R{
{T_0B}& 			{H^0(C,{\mathcal{T}}_{\pi})} & \\
&	{H^1(C,T_C)} & 
\ar"1,1";"2,2"_<<<{KS}\ar"1,1";"1,2"^{\tau}\ar"1,2";"2,2"^{\delta}
}
\end{equation}
where $\delta: H^0(C,\mathcal{T}_{\pi}) \rightarrow H^1(C,T_C)$ is the connecting homomorphism associated to (\ref{ses-Hori}). By construction this gives a one-to-one correspondence between the vector space $H^0(C,\cT_{\pi})$ and the set of equivalence classes of first-order deformations of the morphism $\pi: C \rightarrow C'$ (leaving $C'$ fixed).
	
The sheaf ${\mathcal{T}}_{\pi}$ can be more explictly described through the ramification divisor $R$ of $\pi$. Indeed, by definition of the ramification divisor there is an isomorphism $\pi^*T_{C'} \cong T_C(R)$ identifying $d\pi$ with the natural inclusion $T_C \hookrightarrow T_C(R)$. This in turn induces an isomorphism ${\mathcal{T}}_{\pi}\cong{T_C(R)}_{|R}$, which we use to construct a divisor minimally supporting $f$ in some cases.

In the previous setting, we say that the family $f$ is obtained from some $\pi=\pi_o: C \rightarrow C'$ by moving some (distinct) branch points $q_1,\ldots,q_k \in C'$ (while keeping the remaining branch points $q_{k+1},\ldots,q_n$ fixed) if there are some maps $\widetilde{q_1},\ldots,\widetilde{q_k}: B \rightarrow C'$ injective around $b=o$ and such that each $\pi_b: C_b \rightarrow C'$ is ramified over $\widetilde{q_1}(b),\ldots,\widetilde{q_k}(b),q_{k+1},\ldots,q_n$ with the same ramification type as for $b=o$.
	
\begin{lemma} \label{lem:Hori-divisor}
Keeping the above notations, suppose furthermore that for $i=1,\ldots,k$ there is only one ramification point $p_i$ over $q_i$, let $r_i+1$ be its ramification index and set $D=\sum_{i=1}^kr_ip_i$. If $H^0\left(C,T_C\left(D\right)\right)=0$, then any deformation of $\pi$ obtained moving $q_1,\ldots,q_k$ is minimally supported in $D$.
\end{lemma}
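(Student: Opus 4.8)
The plan is to trace the statement through Horikawa's diagram $(\ref{dia-Hori})$, which tells us that the infinitesimal deformation $\xi=KS(\partial/\partial t)$ of $C$ induced by $f$ equals $\delta(\tau)$, where $\tau\in H^0(C,\cT_\pi)$ is the characteristic class of the deformation of $\pi=\pi_o$ and $\delta$ is the connecting map of $(\ref{ses-Hori})$. Using the isomorphism $\cT_\pi\cong T_C(R)_{|R}$ ($R$ the ramification divisor of $\pi$), which turns $(\ref{ses-Hori})$ into $0\to T_C\to T_C(R)\to T_C(R)_{|R}\to 0$, the first step is bookkeeping. Since $q_1,\dots,q_n$ is the full branch locus, $q_1,\dots,q_k$ are pairwise distinct and distinct from the fixed branch points $q_{k+1},\dots,q_n$, and over each $q_i$ there is a single ramification point $p_i$, we may write $R=D+E$ where $E$ is the ramification over $q_{k+1},\dots,q_n$ and $\operatorname{supp}(E)$ is disjoint from $\operatorname{supp}(D)=\{p_1,\dots,p_k\}$. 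Consequently $T_C(R)_{|R}$ is the direct sum of a sheaf supported on $D$, canonically $T_C(D)_{|D}$ because $\cO_C(E)_{|D}\cong\cO_D$, and a sheaf supported on $E$; the inclusion $T_C(D)\hookrightarrow T_C(R)$ induces $T_C(D)_{|D}\hookrightarrow T_C(R)_{|R}$ onto the first summand; and by functoriality of connecting homomorphisms, on that summand $\delta$ restricts to the connecting map $\delta_D$ of $0\to T_C\to T_C(D)\to T_C(D)_{|D}\to 0$, whose image is exactly $\ker\!\left(H^1(C,T_C)\to H^1(C,T_C(D))\right)$, i.e.\ by $(\ref{eq:infinitesimal-supp-div})$ the subspace of deformations supported in $D$.

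The core is the local computation of $\tau$. Near a ramification point lying over a fixed branch point $q_j$ ($j>k$) one can pick a local coordinate $z$ on $\cC$ (allowed to depend on the base parameter $t$) in which the map reads $w=z^{m+1}$ independently of $t$, because $q_j$ and the ramification type are kept fixed; then the defining $0$-cochain vanishes there and $\tau$ has trivial stalk. Near $p_i$, after normalizing $z$ (centered at $p_i$, possibly $t$-dependent) and taking $w$ centered at $q_i=\widetilde{q_i}(o)$, the family can be written as
\[
\Phi_i(z,t)=z^{r_i+1}+\widetilde{q_i}(t),\qquad\text{hence}\qquad s_i=\widetilde{q_i}'(o)\,\frac{\partial}{\partial w}=\frac{\widetilde{q_i}'(o)}{r_i+1}\,z^{-r_i}\frac{\partial}{\partial z}
\]
as a section of $T_C(R)$ (using that $d\pi$ carries $\partial/\partial z$ to $(r_i+1)z^{r_i}\,\partial/\partial w$). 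One checks along the way that $t$-dependent reparametrizations of the source change each $s_i$ only by an element of $\im(d\pi)=\ker(p_\pi)$, so they do not affect $\tau$. Therefore $\tau$ lies in the $D$-summand, hence lifts to $\tau_D\in H^0(C,T_C(D)_{|D})$, and at each $p_i$ the class $\tau_D$ is represented by $\tfrac{\widetilde{q_i}'(o)}{r_i+1}z^{-r_i}\partial/\partial z$, whose coefficient of $z^{-r_i}$ is nonzero since $\widetilde{q_i}$ is injective around $o$, hence an immersion there.

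Putting this together, $\xi=\delta(\tau)=\delta_D(\tau_D)\in\im(\delta_D)=\ker\!\left(H^1(C,T_C)\to H^1(C,T_C(D))\right)$, so $D$ supports $\xi$; equivalently the divisor $\cD=\sum_{i=1}^k r_i\,\widetilde{p_i}\subset\cC$, where $\widetilde{p_i}\colon B\to\cC$ is the section through the ramification points over $\widetilde{q_i}$, restricts to a supporting divisor on every fibre. For minimality I would invoke the hypothesis $H^0(C,T_C(D))=0$: the long exact sequence of $0\to T_C\to T_C(D)\to T_C(D)_{|D}\to 0$ then makes $\delta_D$ injective. If some effective $D'=\sum_{i=1}^k r_i'p_i<D$ (so $0\le r_i'\le r_i$ and $r_j'<r_j$ for some $j$) also supported $\xi$, we would have $\xi=\delta_{D'}(\sigma)$ for some $\sigma\in H^0(C,T_C(D')_{|D'})$; pushing $\sigma$ along the natural inclusion $\iota\colon H^0(C,T_C(D')_{|D'})\hookrightarrow H^0(C,T_C(D)_{|D})$ and using $\delta_{D'}=\delta_D\circ\iota$ together with injectivity of $\delta_D$, we would get $\tau_D=\iota(\sigma)$. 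But every class in $\im(\iota)$ is represented at $p_j$ by a local vector field with a pole of order at most $r_j'<r_j$, so its coefficient of $z^{-r_j}$ at $p_j$ vanishes, contradicting the previous paragraph. Hence no proper subdivisor of $D$ supports $\xi$, and $D$ supports $f$ minimally.

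I expect the local computation of the characteristic class to be the delicate point: one must justify the model $\Phi_i(z,t)=z^{r_i+1}+\widetilde{q_i}(t)$ near $p_i$ (straightening the unique ramification point to $z=0$ for all $t$ and removing the unit factor by a $t$-dependent root extraction, compatibly with $f$) as well as the triviality of the family over the fixed branch points. The rest — the splitting of $\cT_\pi$, the identification of the several connecting homomorphisms, and the injectivity argument — is formal once $\tau$ has been computed.
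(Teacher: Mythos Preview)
Your proof is correct and follows essentially the same route as the paper: compute the Horikawa characteristic class in local coordinates, observe it lands in the $D$-summand of $T_C(R)_{|R}$ with nonvanishing top-order pole at each $p_i$, and then use the injectivity of $\delta_D$ coming from $H^0(C,T_C(D))=0$ to rule out any strict subdivisor. The only cosmetic differences are that you keep the constant $\widetilde{q_i}'(o)$ explicit and treat an arbitrary $D'<D$ directly, whereas the paper normalizes the base coordinate so this constant is $1$ and reduces minimality to the case $D'=D-p_i$.
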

\begin{proof}
We consider the extension class $\xi \in H^1(C,T_C)$ induced by $f$ on $C=f^{-1}\left(o\right)$ and we prove that this is minimally supported over $D=\sum_{i=1}^k r_ip_i$. To do so, we compute $\xi$ by using the Horikawa's characteristic map. Fix first a local coordinate $t$ of $B$ centered in $o\in B$ and for each $i=1,\ldots,k$ choose local coordinates $z_i$ resp. $w_i$, centered on $p_i \in C$ resp. $q_i=\pi(p_i)\in C'$, such that $w_i=f(z_i,t)=z_i^{r_i+1}+t$. Then $\xi$ is given as
$$\xi = KS\left(\pder{t}\right)=\delta\left(\sum_{i=1}^k\pder{w_i}\right)=\delta\left(\sum_{i=1}^k\frac{1}{(r_i+1)z_i^{r_i}}\pder{z_i}\right).$$
Since $\sum_{i=1}^k\frac{1}{(r_i+1)z_i^{r_i}}\pder{z_i}$ is an element in $H^0(D, {T_C(D)}_{|D})\subset H^0(R,{T_C(R)_{|R}})$, this proves that
$$\xi\in \Ima (H^0(C,T_{C}(D)_{|D}) \longra H^1(C,T_{C}))$$
and so that $D$ supports $\xi$. We now prove that $D$ is minimally supporting $\xi$, i.e. that any effective subdivisor $D'<D$ does not support it. To do this it is enough to consider a subdivisor $D'=D-p_i$ of $D$ obtained by removing a point $p_i$ and then check this is not supporting $f$. We consider the short exact sequences $0\to T_C\to T_C(D')\to {T_C(D')}_{|D'}\to 0$ and $0\to T_C\to T_C(D)\to {T_C(D)}_{|D}\to 0$ induced by $D$ and $D'$ and we compare them through the inclusion $D'<D$. Since we have assumed $H^0(C, T_C(D))=0$, the map $H^0(C,T_{C}(D)_{|D}) \longra H^1(C,T_{C})$ is injective, hence it is enough to check that $\frac{1}{z_i^{r_i}}\pder{z_i}$ does not lie in $H^0(D',{T_C(D')}_{|D'})\subseteq H^0(D, {T_C(D)}_{|D})$. Indeed with the induced trivializations, the map $H^0(D', {T_C(D')}_{|D'}) \rightarrow H^0(D, {T_C(D)}_{|D})$ is given by multiplication with $z_i$, and sends the subset $<1,z_i, \dots , z_i^{r_i-2}>\otimes \{\frac{1}{z_i^{r_i-1}}\frac{\partial}{\partial z_i}\}$ of $H^0(D', {T_C(D')}_{|D'})$ to the subset $<z_i,z_i^2,\dots,z_i^{r_i-1}>\otimes \{\frac{1}{z_i^{r_i}}\frac{\partial}{\partial z_i}\} \subset H^0(D, {T_C(D)}_{|D})$, which obviously does not contain $\frac{1}{z_i^{r_i}}\frac{\partial}{\partial z_i}$.
\end{proof}

\begin{remark} \label{rmk:non-isotr}
Note that in the above setting, if $k\geq1$, there is a non-zero minimal supporting divisor. This implies that the family is not isotrivial, since the infinitesimal deformation is not zero.
\end{remark}

\section{The case of rigid supporting divisors}\label{sec:rigidcase}

In this section we study the ranks of the flat and kernel bundles for families supported on (relatively) rigid divisors and we also analyse the monodromy of the flat bundle. In particular, we construct families of curves with $\mathcal{K}$ of any given rank between $0$ and $g-1$. On the other hand, we show that $\rk \mathcal{U} \leq \frac{g+1}{2}$, hence in particular we can construct (local) families with $\mathcal{U} \subsetneq \mathcal{K}$. Note that $\rk\cK=g$ happens if and only if the family is isotrivial, and hence also $\cU=\cK$.

We start recalling the basic definitions around these bundles. Let $B$ be a complex curve and $f: \cC \rightarrow B$ be a non-isotrivial semistable family of projective curves of genus $g\geq 2$. Consider the Hodge bundle $f_* \omega_f$, where $\omega_f=\omega_\cC \otimes f^*\omega_B^{\vee}$. The Fujita decomposition \cite{Fuj78b} factors it as a direct sum $f_* \omega_f=\cU\oplus \cA$, with $\cU$ unitary flat and $\cA$ ample. If $\Gamma\subset B$ denotes the set of critical values (corresponding to singular fibers) and $\Upsilon=f^*\Gamma$, we can also consider the short exact sequence
$$0\to f^\ast \omega_B (\log \Gamma) \to \Omega^1_\cC( \log \Upsilon) \to \Omega^1_{\cC/B}(\log \Upsilon) \to 0.$$
Pushing it forward and using the canonical isomorphism $f_\ast\omega_f\simeq f_\ast\Omega^1_{\cC/B}( \log \Upsilon)$ we obtain a long exact sequence with connecting homomorphism
$$\theta: f_\ast \Omega^1_{\cC/B}\left(\log\Upsilon\right)\to R^1f_\ast f^\ast\omega_B(\log\Gamma) \simeq R^1f_\ast \cO_{\cC}\otimes \omega_B(\log \Gamma).$$
It is a morphism of vector bundles whose kernel $\cK=\ker \theta$ is a vector subbundle of $f_\ast\omega_f$.

\begin{definition}
We call the bundles $\cU$ and $\cK$ as defined above  the {\em flat bundle}  and {\em kernel bundle of $f$}, respectively.
\end{definition}

On the smooth locus of $f$ the objects as introduced above are naturally defined by the polarized variation of the Hodge structure. With a little abuse of notation, suppose for a moment that $f$ is smooth. In this case the Hodge bundle is
$$\cH^{1,0}=f_* \omega_f \subset \mathcal{H}^1=R^1f_*\mathbb{C} \otimes_{\mathbb{C}} \mathcal{O}_B,$$
where $\omega_f=\omega_\cC \otimes f^*\omega_B^{\vee} \cong \Omega_{\cC/B}^1$ because $f$ is smooth. The Gau\ss-Manin connection restricts to $\nabla_{\cH^{1,0}}:\cH^{1,0} \to \mathcal{\mathcal{H}}^1 \otimes \omega_B$, the {\em flat local system} over ${\cH^{1,0}}$ is $\mathbb{U}=\ker \nabla_{\cH^{1,0}}$ and the {\em flat subbundle} is $\mathcal{U}= \mathbb{U} \otimes_{\mathbb{C}} \mathcal{O}_B$. The Higgs field
$$\theta = p \circ \nabla_{\cH^{1,0}}: \cH^{1,0} \rightarrow \mathcal{H}^1 \rightarrow \mathcal{H}^1/\cH^{1,0} \otimes \omega_B \cong R^1f_*\mathcal{O}_\cC \otimes \omega_B$$
coincides with the connecting homomorphism
$$\theta: \cH^{1,0}=f_*\Omega_{\cC/B}^1 \rightarrow R^1f_*(f^*\omega_B) = R^1f_*\mathcal{O}_\cC \otimes \omega_B$$
arising by pushing forward the exact sequence $0 \rightarrow f^*\omega_B \rightarrow \Omega_\cC^1 \rightarrow \Omega_{\cC/B}^1 \rightarrow 0$, and the {\em kernel bundle} is $\mathcal{K} = \ker\theta$. In the non-smooth case, the extensions of these over the singular locus of $f$ define the same bundles as above.

By construction there are inclusions $\cU\subseteq\cK\subseteq f_*\omega_f$, which combined with the splitting $f_\ast\omega_f\cong\cU\oplus\cA$ give an exact sequence 
$$0\to \cK/\cU\to\cA\to f_*\omega_f/\cK\to 0,$$
exhibiting $\cK/\cU$ as a vector subbundle of $\cA$. If $\cU\neq\cK$, $\cK/\cU$ has negative curvature by \cite{Z00}, hence $f_*\omega_f/\cK$ has bigger degree than $\cA$.

Our main tool in order to understand how $\cK$ can be larger than $\cU$  is given by the following 

\begin{lemma} \label{lem-relrigRanks}
Let $f$ be minimally supported on a divisor $\cD$ with $\cD \cdot C_b = d$ and $h^0(C_b,\mathcal{O}_{C_b}(\cD_{|C_b}))=1$ for general $b \in B$ (i.e. $\cD$ is relatively rigid). Then $1.$  $\rk \mathcal{K}=g-d$ and $2.$ $\rk \mathcal{U} \leq \frac{g+1}{2}$.
\end{lemma}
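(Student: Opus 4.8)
The strategy is to reduce both statements to pointwise (fibrewise) computations using Lemma \ref{Lem-BoundKer}, and then to use the semicontinuity/local-system structure of $\cU$ and $\cK$ to pass from the general fibre to the rank of the bundles. For part~1, I would argue as follows. At a general $b\in B$ the Higgs field $\theta(b)$ agrees (up to scalar) with $\cup\xi_b$, where $\xi_b\in H^1(C_b,T_{C_b})$ is the infinitesimal deformation, so $\rk\cK$ equals the generic dimension of $\ker(\cup\xi_b)$. By hypothesis $\cD$ restricts to a \emph{minimal} supporting divisor $D_b$ of $\xi_b$ with $\deg D_b = d$ and $r(D_b)=0$ (since $h^0(C_b,\cO_{C_b}(D_b))=1$). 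Then part~1 of Lemma \ref{Lem-BoundKer} gives $\dim\ker(\cup\xi_b)\geq g-(\deg D_b - r(D_b)) = g-d$, while part~2 gives $\dim\ker(\cup\xi_b)\leq g-\Cliff(D_b) = g-(d-0)=g-d$. Hence $\dim\ker(\cup\xi_b)=g-d$ at the general $b$, and since $\cK$ is a vector subbundle of $f_*\omega_f$ this generic value is its rank: $\rk\cK=g-d$. This disposes of the first assertion essentially for free; the only subtlety to record is why the relative rigidity assumption guarantees that $D_b$ is still \emph{minimal} on the general fibre (minimality of $\cD$ as a divisor supporting $f$ is built into the hypothesis ``$f$ is minimally supported on $\cD$'', so this is by definition).

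For part~2, the point is to bound $\rk\cU$, and here I would exploit the explicit description of $\ker(\cup\xi_b)$ coming from the rigid supporting divisor together with the flatness of $\cU$. From part~1 of Lemma \ref{Lem-BoundKer} we know $H^0(C_b,\omega_{C_b}(-D_b))\subseteq\ker(\cup\xi_b)=\cK\otimes\bC(b)$, and since $r(D_b)=0$ this subspace has dimension exactly $g-d$, so it \emph{equals} $\ker(\cup\xi_b)$; thus $\cK\otimes\bC(b)=H^0(C_b,\omega_{C_b}(-D_b))$ for general $b$. Now $\cU\subseteq\cK$, and $\cU$ is flat, hence in particular isotropic for the polarization (a flat subbundle of a VHS of weight one on which the Higgs field vanishes is isotropic with respect to the intersection form). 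A maximal isotropic subspace of $H^1(C_b,\bC)$ has dimension $g$, so an isotropic subbundle of $\cH^{1,0}$ has rank $\leq g$ — but that is not yet $\tfrac{g+1}{2}$. To get the sharper bound I would instead argue that $\cU$, being flat \emph{and} contained in $\cK$ whose general fibre is $H^0(C_b,\omega_{C_b}(-D_b))$, maps to a Lagrangian-type subspace whose dimension is constrained by the following: the cup product pairing $H^0(C_b,\omega_{C_b}(-D_b))\times H^0(C_b,\omega_{C_b}(-D_b))\to H^1(C_b,\omega_{C_b}(-2D_b))$, combined with Serre duality and the fact that $\cU$ is both a sub and (via the polarization) conjugate to a quotient, forces $2\rk\cU\leq g+1$. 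More concretely: the flat unitary bundle $\cU$ satisfies $\cU\cap\overline{\cU}=0$ inside $\cH^1$ unless it is trivial in a range, and the piece of $\cH^1$ available for $\cU\oplus\overline{\cU}$ is cut down by the rank of $\cK$; chasing the inequality $\rk\cU + \rk\cU \le g - d + (\text{something} \le 1)$ yields $\rk\cU\le\frac{g+1}{2}$. This is the step I expect to require the most care — pinning down exactly why the rigid-divisor structure of $\cK$ limits the \emph{flat} part to half the genus (plus the $+1$), rather than merely to all of $\cK$.

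The main obstacle, then, is not part~1 (which is a clean sandwich from Lemma \ref{Lem-BoundKer}) but the factor-of-two improvement in part~2. I would look for the mechanism in the interaction between the second fundamental form of $\cU\subseteq\cH^{1,0}$ (which vanishes, as $\cU$ is flat) and the curvature of the Hodge bundle: Zuo's result \cite{Z00} says any sub of $\cK$ strictly bigger than $\cU$ is seminegative, and dually $f_*\omega_f/\cK$ is ``large''; combined with the Arakelov-type inequality for $\cA$ and the explicit $\rk\cK=g-d$, a counting argument on degrees should force $\rk\cU\le\frac{g+1}{2}$. Alternatively, and perhaps more robustly, one can use that $\cU$ flat implies $\cU\otimes\bC(b)$ is generated by classes in $H^0(C_b,\omega_{C_b}(-D_b))$ that remain in $H^0$ under the monodromy, together with Castelnuovo–de Franchis / the fact that a $2$-dimensional isotropic space of holomorphic forms with all pairwise wedges zero already forces a fibration — iterating this pushes any isotropic flat family inside the Hodge bundle to have rank at most $\lfloor\frac{g+1}{2}\rfloor$. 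I would write up part~1 in full and then choose whichever of these two routes for part~2 gives the cleanest self-contained argument, citing \cite{Z00} and the flatness of $\cU$ as the key inputs.
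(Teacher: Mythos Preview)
Your treatment of part~1 is correct and matches the paper exactly: the rigidity $r(D_b)=0$ collapses both inequalities in Lemma~\ref{Lem-BoundKer} to $\dim\ker(\cup\xi_b)=g-d$, and this is $\rk\cK$ at a general point.

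Part~2, however, has a genuine gap. Your first two routes (isotropic/polarization counting, and a degree argument via Zuo/Arakelov) do not lead to the bound $\tfrac{g+1}{2}$ as stated; you yourself note the isotropic argument stalls at $g$, and the sketched inequality ``$\rk\cU+\rk\cU\le g-d+(\text{something}\le1)$'' has no clear source. Your third route, Castelnuovo--de Franchis, is the right idea and is what the paper uses, but the mechanism is not the one you describe. There is no ``iteration''; the bound does \emph{not} fall out of the CdF step alone. What actually happens is:
\begin{enumerate}
\item Because the supporting divisor is \emph{relatively rigid}, a basis of flat sections of $\cU\subseteq\cK$ lifts to linearly independent \emph{closed} $1$-forms $\omega_1,\dots,\omega_{u_f}\in H^0(\cC,\Omega^1_{\cC})$ on the total space with $\omega_i\wedge\omega_j=0$ for all $i,j$. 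This is the step where rigidity is used, and it is not automatic from flatness of $\cU$; the paper imports it from \cite[\S3.1, case~1]{GST17} and \cite[Lemma~3.2]{PT}. Your proposal never identifies where rigidity enters part~2.
\item A \emph{tubular} Castelnuovo--de Franchis theorem (\cite[Theorem~1.4]{GST17}) then produces a family of maps $\varphi_b\colon C_b\to C$ to a \emph{fixed} curve $C$ with $g(C)\ge u_f=\rk\cU$.
\item The bound $\rk\cU\le\tfrac{g+1}{2}$ comes from Riemann--Hurwitz: $2g-2\ge\deg\varphi_b\,(2u_f-2)$, and since $f$ is non-isotrivial one has $\deg\varphi_b\ge2$, whence $g-1\ge2(u_f-1)$.
\end{enumerate}
You are missing all three of these specific ingredients: the lifting-with-zero-wedges (and its dependence on rigidity), the passage to a fixed target curve, and the Riemann--Hurwitz step that is the actual source of the factor $\tfrac12$.
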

The proof follows the line of \cite{GST17}.
\begin{proof}
$1.$ It follows from Lemma \ref{Lem-BoundKer}. For a general $b \in B$, we indeed have
$$\cK \otimes \bC\left(b\right) = \ker\left(\cup\xi_b: H^0\left(C_b,\omega_{C_b}\right) \longrightarrow H^1\left(C_b,\cO_{C_b}\right)\right).$$ Since $h^0(C_b,\mathcal{O}_{C_b}(\cD_{|C_b}))=1,$ then 
$$\rk \cK = \dim \cK \otimes \bC(b) = \dim \ker (\cup\xi_b) = g - \deg(\cD_{|C_b}) = g - d.$$

$2.$ The argument follows the line of \cite[section 3.1, case $1$]{GST17} (see also \cite[Lemma 3.2]{PT}). Assume that $\rank \cU\geq 2$, otherwise there is nothing to prove. By assumption $f$ is supported on a relatively rigid divisor, meaning that the divisor restricts to a rigid divisor on any smooth fibre. Then, we can lift a basis $\eta_1,\ldots,\eta_{u_f}$ of flat sections of $\cU \subseteq \cK$, to a set $\omega_1,\ldots,\omega_{u_f} \in H^0\left(\cC,\Omega_\cC^1\right)$ of linearly independent {\em closed} 1-forms with the property that any two of these forms wedge to zero. Applying the ``Tubular Castelnuovo-de Franchis'' (see \cite[Theorem 1.4]{GST17}), we get a family $\varphi_b:C_b\to C$ of morphisms from the general fibre $C_b$ of $f$ to a fixed curve $C$ of genus $g(C) \geq u_f=\rk \cU$. By the Riemann-Hurwitz formula, 
$$ 2g-2=\deg \varphi_b (2g(C)-2)+\deg R_b \geq 2 \deg \varphi_b (u_f-1),$$
where $R_b$ is the ramification divisor of $\varphi_b$.  In particular,
$$ g-1\geq\deg \varphi_b (u_f-1),$$
and so for $u_f>\frac{g+1}{2}$ and $g\geq 2$, one has $\deg \varphi_b=1$ and hence a isotrivial family.
\end{proof}

\begin{lemma}\label{lem:relrigmonodromy} Let $f$ be minimally supported on a relatively rigid divisor. Then $\cU$ has finite monodromy. 
\end{lemma}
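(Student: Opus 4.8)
The strategy is to combine the previous lemma (Lemma~\ref{lem-relrigRanks}) with the Castelnuovo--de Franchis-type argument already used there. The key observation is that if $f$ is minimally supported on a relatively rigid divisor $\cD$, then a basis of flat sections of $\cU$ lifts to closed holomorphic $1$-forms on $\cC$ that pairwise wedge to zero, and by the ``Tubular Castelnuovo--de Franchis'' theorem (\cite[Theorem 1.4]{GST17}) these come from a family of morphisms $\varphi_b\colon C_b\to C_0$ to a \emph{fixed} curve $C_0$ of genus $g(C_0)\geq\rk\cU$. The monodromy of $\cU$ is then controlled by how the flat sections are permuted, which is governed by the pullback map $\varphi_b^*\colon H^0(C_0,\omega_{C_0})\hookrightarrow H^0(C_b,\omega_{C_b})$ together with monodromy on $C_0$; since $C_0$ is \emph{fixed} (does not vary with $b$), the only monodromy that can occur is that coming from the family structure, and I will argue it must factor through a finite group.

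More precisely, here is the sequence of steps. First, invoke Lemma~\ref{lem-relrigRanks}(2) so that we may assume $\rk\cU\geq 2$ (if $\rk\cU\leq 1$ the monodromy is automatically finite, in fact trivial on the relevant piece, since a rank-$1$ unitary flat bundle on a curve has image in $U(1)$, but we actually want finiteness — so one reduces as in the cited argument). Second, apply the Tubular Castelnuovo--de Franchis theorem to obtain the fixed target curve $C_0$ and the family of maps $\varphi\colon\cC\to C_0$ over $B$ (after possibly an \'etale base change, which does not affect whether monodromy is finite). Third, observe that the flat subbundle $\cU$ is contained in the pullback $\varphi^*(H^0(C_0,\omega_{C_0}))\otimes\cO_B$, which is a \emph{trivial} flat subbundle because $C_0$ and its space of holomorphic $1$-forms do not vary; hence the monodromy representation of $\pi_1(B)$ on $\cU$ factors through its action on a fixed finite-dimensional space coming from $H^0(C_0,\omega_{C_0})$, and in particular through the (finite) deck/monodromy data of the family of coverings $\varphi_b$. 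Fourth, conclude that since $\cU$ is unitary and defined over $\bZ$ (it underlies an integral variation of Hodge structure, being a flat sub-local-system of $R^1f_*\bZ$), its monodromy group is a subgroup of a compact group that is also discrete — hence finite — once one knows the Zariski closure of the monodromy is finite, which the factorization through the fixed $C_0$ provides.

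The cleanest way to package the last point is: the local subsystem $\bU\subset R^1f_*\bZ$ underlying $\cU$ is, after the Tubular Castelnuovo--de Franchis reduction, pulled back from the fixed curve $C_0$; a flat subbundle pulled back from a point (here, from the fixed cohomology $H^1(C_0,\bZ)$, which carries no variation) has monodromy contained in the image of the homomorphism $\pi_1(B)\to \mathrm{Aut}(H^1(C_0,\bZ))$ induced by the family of maps, and this image is finite because it is simultaneously contained in $\mathrm{GL}(H^1(C_0,\bZ))$ (discrete) and, by unitarity of the Hodge metric, in a conjugate of a compact unitary group. A discrete subgroup of a compact group is finite, so we are done. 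This is essentially the content of \cite[Lemma 3.2 and its consequences]{PT}, and in fact one may simply cite \cite{PT} for the final conclusion once the Tubular Castelnuovo--de Franchis reduction has put us in the situation of a family of maps to a fixed curve.

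The main obstacle I expect is making precise the claim that $\cU$ ``is pulled back from $C_0$'' and that the fixedness of $C_0$ genuinely kills the monodromy: one must be careful that the family $\varphi_b$ itself might have monodromy permuting sheets, so a priori the pullbacks $\varphi_b^*$ do not glue to a single flat trivialization of $\cU$. The resolution is that this permutation monodromy is through a \emph{finite} group (the monodromy of a family of branched covers of bounded degree is finite), so even though $\cU$ need not be a trivial local system, its monodromy image lands in a finite group; combined with the independent fact (from \cite{Z00}, \cite{PT}) that $\cU$ is the maximal unitary flat summand, this forces finiteness. Alternatively, and perhaps more robustly, one argues directly: the Zariski closure $G$ of the monodromy of $\cU$ is reductive (unitarity) and the associated period map to the period domain is constant on the $\cU$-part (flatness), while the Tubular Castelnuovo--de Franchis data shows every flat section is algebraic/locally constant coming from $C_0$; hence $G$ stabilizes a $\bZ$-lattice and lies in a compact group, forcing $G$ finite. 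I will write the proof by reducing to the situation of \cite[Lemma 3.2]{PT} via Lemma~\ref{lem-relrigRanks} and then quoting the finiteness statement from \cite{PT}, filling in only the verification that the relative rigidity hypothesis here matches the hypothesis needed there.
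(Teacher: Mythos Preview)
Your approach is essentially the same as the paper's: assume $\rk\cU\geq 2$, repeat the Castelnuovo--de Franchis setup from part~2 of Lemma~\ref{lem-relrigRanks}, and then invoke the finiteness result from \cite{PT} (the paper cites \cite[Theorem~0.2]{PT} rather than Lemma~3.2, but the reduction is the same). Most of your lengthy discussion is an attempt to unpack what \cite{PT} actually proves; in the end you plan to just cite it, as does the paper.

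The one genuine gap is your treatment of the case $\rk\cU=1$. You write that ``a rank-$1$ unitary flat bundle on a curve has image in $U(1)$'' as if this implies finite monodromy; it does not, since $U(1)$ has plenty of elements of infinite order. What you need is the additional fact that a unitary flat line bundle arising as a summand of $f_*\omega_f$ is necessarily torsion in $\mathrm{Pic}^0(B)$, hence has finite monodromy. The paper handles this by citing \cite{barja-fujita} for exactly this statement. Your parenthetical ``so one reduces as in the cited argument'' does not fill this in: the Castelnuovo--de Franchis argument requires at least two independent forms and says nothing about rank~$1$.
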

\begin{proof} We can assume $\rank\cU\geq2$ (in the case of rank $1$, the monodromy is finite since the line bundle must be torsion, proven e.g. in \cite{barja-fujita}). Repeating the argument given in $2.$ of the proof of Lemma \ref{lem-relrigRanks}, we have that our bundle satisfies the assumptions of \cite[Theorem 0.2]{PT} and thus has finite monodromy.
\end{proof}

We end this section by providing a way to construct non-isotrivial local families of curves with $\mathcal{K}$ of any given rank between $1$ and $g-1$.
\begin{theorem}\label{thm:localfam}
Let $C$ be any curve of genus $g \geq 3$. Then for any $0 \leq r \leq g-1$ there are one-dimensional deformations of $C$ with $\rk \mathcal{K} = r$.
\end{theorem}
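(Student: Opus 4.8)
The plan is to reduce the statement to the machinery already established, specifically Lemma~\ref{lem:Hori-divisor} together with Lemma~\ref{lem-relrigRanks}, by exhibiting, for each $0\le r\le g-1$, a deformation of $C$ supported minimally on a relatively rigid divisor of degree $d=g-r$. The case $r=g$ is excluded precisely because it forces isotriviality, as already noted. When $r=g-1$ we need $d=1$: a single point $p\in C$ with $H^0(C,T_C(p))=0$, which always holds for $g\ge2$, so moving a simple branch point of a suitable covering $C\to\bP^1$ does the job. The heart of the argument is the general $d$, $1\le d\le g-1$.

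First I would construct a finite morphism $\pi\colon C\to\bP^1$ that is simply branched except over one chosen point $q_0\in\bP^1$, over which there is a single ramification point $p_0\in C$ with ramification index $d+1$ (the remaining ramification being simple, located at other points of $\bP^1$). Such a $\pi$ exists for any $C$ of genus $g\ge3$ and any $1\le d\le g-1$: take a generic projection/linear system of large enough degree to guarantee both the simple branching elsewhere and the prescribed local ramification profile over $q_0$; the degree of $\pi$ is chosen large so that the Riemann--Hurwitz count accommodates the fixed total ramification. Writing $D=d\,p_0=r_0p_0$ with $r_0=d$, I then need the hypothesis $H^0(C,T_C(D))=0$ of Lemma~\ref{lem:Hori-divisor}. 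Since $\deg D=d\le g-1\le 2g-3<2g-2=\deg\omega_C$, by Serre duality $h^0(C,T_C(D))=h^1(C,\omega_C(-D))\cdot$ vanishes generically, but for a \emph{fixed} $C$ I would instead choose $p_0$ (or rather, choose the covering $\pi$ so that $p_0$ lands in) the Zariski-open dense locus of points/divisors for which $H^0(C,T_C(D))=0$; equivalently $D$ moves in no pencil together with the right rigidity, and such divisors of degree $\le g-1$ are generic by a standard Brill--Noether type observation (a general effective divisor of degree $\le g-1$ is rigid and has $H^0(T_C(D))=0$).

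Next I would deform $\pi$ by moving the single branch point $q_0$ along $\bP^1$ (keeping the other, simple, branch points fixed, or allowing them to move as well — only moving $q_0$ is needed), obtaining a one-dimensional family $(f,\Phi)\colon\cC\to B\times\bP^1$ over a disk $B$ with $f^{-1}(o)=C$. By Lemma~\ref{lem:Hori-divisor} this family is minimally supported on $D=d\,p_0$, and by Remark~\ref{rmk:non-isotr} it is non-isotrivial (as $k=1\ge1$). The divisor $D$ restricted to the nearby fibres deforms to $d\,p_b$ where $p_b$ is the single ramification point over the moving branch point; since rigidity is an open condition and we arranged $h^0(C,\cO_C(D))=1$ on the central fibre, after shrinking $B$ the divisor $\cD$ is relatively rigid with $\cD\cdot C_b=d$. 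Lemma~\ref{lem-relrigRanks}(1) then gives $\rk\cK=g-d=r$, completing the construction for $1\le r\le g-1$; the case $r=0$ requires $d=g$, but then $D$ has degree $g$ and generic such $D$ still satisfies $h^0(\cO_C(D))=1$ and $H^0(T_C(D))=0$ only if $g\le 2g-2$, i.e. $g\ge2$, and one repeats the same argument with ramification index $g+1$ over $q_0$ — here one must double-check $H^0(C,T_C(D))=h^1(C,\omega_C(-D))$ with $\deg\omega_C(-D)=g-2\ge0$ vanishes for generic $D$, which holds since a generic line bundle of degree $g-2\ge 1$ on a curve of genus $g\ge3$ has no sections... actually $\deg = g-2$ and genus $g$: generic such bundle is non-effective iff $g-2<g$, i.e. always, so $h^0(\omega_C(-D))=0$ generically, giving the needed vanishing.

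The main obstacle I anticipate is not the deformation-theoretic bookkeeping — that is essentially packaged by Lemmas~\ref{lem:Hori-divisor} and~\ref{lem-relrigRanks} — but the \emph{existence} of the covering $\pi\colon C\to\bP^1$ with exactly the prescribed ramification behaviour (one totally-specified high-order ramification point over $q_0$, simple branching elsewhere) \emph{for the given curve $C$}, together with the requirement that the chosen $p_0$ satisfies $H^0(C,T_C(d\,p_0))=0$. This is where the hypothesis $g\ge3$ enters (the excerpt's theorem statement asks for $g\ge3$, whereas the global Theorem~\ref{thm:main} allows $g\ge2$ at the cost of a weaker conclusion). I would handle existence by starting from a line bundle $L$ of large degree $n$ on $C$ with a base-point-free pencil in $|L|$ whose general member is reduced and whose associated morphism $C\to\bP^1$ is simply branched (possible by Bertini-type arguments for $n\gg0$), then modifying the pencil near one point to force the ramification index $d+1$ over a single $q_0$ — concretely, prescribing the $(d+1)$-jet of the map at $p_0$; the remaining freedom in the pencil is then used to keep $H^0(C,T_C(d\,p_0))=0$, which is an open dense condition on the choice. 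Verifying that these open conditions are simultaneously non-empty is the one genuinely nontrivial point, and it is exactly the sort of genericity statement the introduction alludes to when it says ``the set of rigid divisors of a given degree of a curve is open and Zariski-dense in the Picard variety.''
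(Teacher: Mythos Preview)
Your approach is correct in outline and genuinely different from the paper's. You build a covering $\pi\colon C\to\bP^1$ with a single ramification point $p_0$ of index $d+1$ over one branch point, then move that branch point and invoke Lemmas~\ref{lem:Hori-divisor} and~\ref{lem-relrigRanks}. This is essentially the strategy the paper deploys later for Theorem~\ref{thm:main} (though there the ramification is distributed over several simple points rather than concentrated at one). A small clean-up: the vanishing $H^0(C,T_C(D))=0$ is immediate from $\deg T_C(D)=2-2g+d\le 2-g<0$ for $d\le g$ and $g\ge3$, so your Serre-duality digression is unnecessary (and contains a slip: $h^0(T_C(D))$ is dual to $h^1(\omega_C^{\otimes 2}(-D))$, not $h^1(\omega_C(-D))$). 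The point you correctly flag as the main obstacle---existence of $\pi$ with the prescribed local profile at a pre-chosen non-Weierstrass $p_0$---is the only real work, and can be made precise along the lines you sketch: take $L$ very ample of large degree, pick $s_0\in H^0(L(-(d+1)p_0))\setminus H^0(L(-(d+2)p_0))$ generic so its remaining zeros are simple, then $s_1$ generic with $s_1(p_0)\ne0$; the pencil $\langle s_0,s_1\rangle$ gives $\pi$.

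The paper proves this theorem by a different, more intrinsic route that avoids auxiliary coverings entirely. It works inside $\bP(H^1(C,T_C))$ via the bicanonical embedding: a divisor $D$ supports $\xi$ iff $[\xi]$ lies in the linear span $\langle D\rangle$, so first-order deformations minimally supported on some rigid degree-$d$ divisor form a dense open subset $Y_d^\circ$ of the $d$-th secant variety of $C$. The paper then globalizes this over a semiuniversal family $\Delta$, obtains a section $\sigma\colon\Delta\to\cY_d^\circ$ into the projectivized tangent bundle, views $\sigma$ as a rank-one distribution, and takes an integral curve $B\subset\Delta$. Your approach is more hands-on and already points towards the quasi-projective extension of Theorem~\ref{thm:main}; the paper's is conceptually cleaner for the local statement, needs no covering construction, and makes the density of suitable infinitesimal deformations transparent (it is literally the secant-variety statement). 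Both routes land on Lemma~\ref{Lem-BoundKer} with a rigid minimal supporting divisor of degree $d$.
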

\begin{proof}
Let us first consider a more geometric interpretation of supporting divisors. Let $C$ be a curve of genus $g$ and $\phi: C \rightarrow \mathbb{P}(H^0(C,\omega_C^{\otimes2})^{\vee}) \cong \mathbb{P}^{3g-4}$ its bicanonical embedding. Given an effective divisor $D$ in $C$, we define its {\em span} as
$$\left\langle D \right\rangle := \cap_{D \leq \phi^*H} H = \mathbb{P}\left(H^0(C,\omega_C^{\otimes2}(-D))^{\perp}\right),$$
i.e., the intersection of all hyperplanes cutting out a divisor on $C$ that contains $D$, which coincides with the projectivization of the annihilator of $H^0(C,\omega_C^{\otimes2}(-D))$. In particular, if $\deg D < \deg \omega_C = 2g-2$ then Riemann-Roch gives $\dim\left\langle D \right\rangle = \deg D - 1$.

Let now $\xi \in H^1(C,T_C)$ be a non-zero first-order infinitesimal deformation, which defines a point $[\xi] \in \mathbb{P}(H^1(C,T_C)) \cong \mathbb{P}(H^0(C,\omega_C^{\otimes2})^{\vee})$. It is just a reformulation of the definitions that a divisor $D$ supports $\xi$ if and only if $[\xi] \in \left\langle D \right\rangle$. Thus first-order deformations supported on a divisor $D$ correspond to points in $\left\langle D \right\rangle$. Furthermore, $\left\langle D' \right\rangle \subsetneq \left\langle D \right\rangle$ for any $0 \leq D' < D$ if and only if $\omega_C^{\otimes2}(-D)$ has no base points, e.g. if $\deg D \leq 2g-4$. In this case, the first-order deformations {\em minimally} supported in $D$ form a non-empty Zariski-open subset $\left\langle D \right\rangle^{\circ}$ of $\left\langle D \right\rangle$, namely the complement of the spans of the finitely many strict subdivisors of $D$.

We want to focus on the deformations supported on rigid divisors of a given degree $d \in \{1,\ldots,g\}$. For any such $d$, the map $C^{(d)}=Div^d(C) \rightarrow Pic^d(C)$ is generically one-to-one, thus the rigid divisors form a non-empty Zariski-open set $V_d \subseteq C^{(d)}$. Let
\begin{multline*}
X_d = \left\{(D,[\xi]) \,\mid\, \deg D=d, \, [\xi] \in \left\langle D \right\rangle\right\} = \bigcup_{D \in C^{(d)}}\{D\}\times\left\langle D \right\rangle \subset C^{(d)} \times \mathbb{P}(H^0(C,\omega_C^{\otimes2})^{\vee})
\end{multline*}
be the obvious incidence variety, which is irreducible of dimension $2d-1$ because $\dim \left\langle D \right\rangle = d-1$ for $d < 2g-2$. The subset
$$X_d^{\circ} = \bigcup_{D \in V_d}\{D\}\times\left\langle D \right\rangle^{\circ} \subset X_d$$
is a dense open subset. Indeed, its complement $X_d\setminus X_d^{\circ}$ is contained in the union of
\begin{enumerate}
\item the Zariski-closed strict subset $\left(V_d\setminus V_{d-1}\right)\times \mathbb{P}(H^0(C,\omega_C^{\otimes2})^{\vee})$, and
\item the image of $X_{d-1} \times C \rightarrow X_d$, defined by $(D',[\xi],p) \mapsto (D'+p,[\xi])$, which has dimension at most
$$\dim X_{d-1} + \dim C = 2(d-1)-1+1 = 2d-2 < \dim X_d.$$
\end{enumerate}
Set also $Y_d = p_2(X_d) \subset \mathbb{P}(H^0(C,\omega_C^{\otimes2})^{\vee})$, which by the above discussion corresponds to the (closed) set of infinitesimal deformations supported on some divisor of degree $d$. Of course, $Y_d$ coincides with the $d$-th secant variety of $C \subset \mathbb{P}(H^0(C,\omega_C^{\otimes2})^{\vee})$. Define also the dense subset
$$Y_d^{\circ} = p_2(X_d^{\circ}) \subset Y_d,$$
which corresponds to the first-order deformations {\em minimally} supported on some divisor of degree $d$. Thus, for any $[\xi] \in Y_d^{\circ}$, there is some minimal supporting divisor $D$ of degree $d$ and $r(D)=0$, and hence by Lemma \ref{Lem-BoundKer}
$$\dim\ker(\cup\xi: H^0(C,\omega_C) \rightarrow H^1(C,\mathcal{O}_C)) = g - d.$$

Let now $\pi:\mathcal{C} \rightarrow \Delta$ be a semiuniversal deformation of $C$ over some $(3g-3)$-dimensional polydisk $\Delta$, $\mathbb{P} = \mathbb{P}_{\Delta}(\pi_*\omega_{\pi}^{\otimes2}) \rightarrow \Delta$ and $\phi: \mathcal{C} \rightarrow \mathbb{P}$ the relative bicanonical map.
We can mimick the above construction on every fibre of $\pi$ and obtain a non-empty locally closed subset $\mathcal{Y}_d^{\circ} \subset \mathbb{P}$ that surjects onto $\Delta$. Indeed, if $\mathcal{C}_{\Delta}^{\left(d\right)}=Div^d\left(\mathcal{C}/\Delta\right)$ denotes the relative symmetric $d$-th product of $\mathcal{C}$, we can consider the Zariski-open subset $\mathcal{V}_d\subseteq\mathcal{C}_{\Delta}^{\left(d\right)}$ corresponding to rigid divisors and the incidence subvariety
$$\mathcal{X}_d=\left\{\left(D,\xi,t\right)\,\mid\,t\in\Delta,D\in Div^d\left(C_t\right),\xi\in H^1\left(C_t,T_{C_t}\right),\left[\xi\right]\in\left\langle D\right\rangle\right\}\subseteq\mathcal{C}_{\Delta}^{\left(d\right)}\times_{\Delta}\mathbb{P}.$$
The announced $\mathcal{Y}_d^{\circ}$ is then the image by the projection to $\mathbb{P}$ of the (non-empty) open subset
$$\mathcal{X}_d^{o}=\left(\mathcal{X}_d\setminus\left(\mathcal{X}_{d-1}\times_{\Delta}\mathcal{C}\right)\right)\cap\left(\mathcal{V}_d\times_{\Delta}\mathbb{P}\right).$$
Up to shrinking $\Delta$, we can find a section $\sigma: \Delta \rightarrow \mathcal{Y}_d^{\circ}$, which thus at every point $b \in B$ defines (up to scalar) a first-order deformation $\xi_b$ minimally supported on a divisor of degree $d$.

Since $T_b\Delta \cong H^1(C_b,T_{C_b})$ for any $b \in \Delta$, the relative bicanonical space $\mathbb{P}$ can be identified with the projectivization of the tangent bundle of $\Delta$. In this way, any section $\sigma: \Delta \rightarrow \mathcal{Y}_d^{\circ}$ can be thought of as a rank-one (hence automatically integrable) distribution on $\Delta$. If $B \subset \Delta$ is any integral curve of a given $\sigma$, the restriction $f=\pi_B: \pi^{-1}(B) \rightarrow B$ gives the desired family.
\end{proof}

These families are constructed over a disk. One could thus wonder, if such examples can exist over a quasi-projective base $B$. The answer is yes, as our main results and also some more explicit examples contructed in section \ref{sec:cyclicexamples} show.

\section{Semistable families of cyclic coverings of $\bP^1$ with $\cK$ larger than $\cU$.}\label{sec:cyclicexamples}
In this section we construct semistable families of curves over a projective base with $\cU \subsetneq \cK$ by moving few branch points of a low degree covering. The largest range for $\rk\cK$ is achieved by families of hyperelliptic curves. Our main tool is the following

\begin{proposition}\label{prop:proj-examples}
Let $\pi: C \rightarrow \mathbb{P}^1$ be a simple cyclic covering of degree $n$ with reduced branch divisor $B=q_1+\ldots+q_m$ ($n\mid m$) and suppose $g\left(C\right)=g\geq2$. Let $f: \cC \rightarrow \Delta$ be a deformation of $C$ obtained by moving the branch points $q_1,\ldots,q_k$. If $k < \frac{m}{n}$, then
$$\rk \cK = g-(n-1)k = \frac{(n-1)(m-2-2k)}{2} \quad \text{and} \quad \rk \cU \leq \frac{g+1}{2}.$$
In particular, if $k < \frac{g-1}{2(n-1)} = \frac{mn-2n-m}{4(n-1)}$, then $\cU \subsetneq \cK$.
\end{proposition}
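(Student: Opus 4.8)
The plan is to realize $C$ as a simple cyclic cover and then apply the machinery of Section~\ref{sec:horidiv} and Lemma~\ref{lem-relrigRanks}. First I would record the geometry of a simple cyclic covering $\pi\colon C\to\mathbb{P}^1$ of degree $n$ with reduced branch divisor $B=q_1+\cdots+q_m$, $n\mid m$: over each $q_j$ there is exactly one ramification point $p_j$ with ramification index $n$ (hence $r_j=n-1$), the ramification divisor is $R=\sum_{j=1}^m (n-1)p_j$, and Riemann--Hurwitz gives $2g-2=n(-2)+m(n-1)$, i.e. $g=\frac{(n-1)(m-2)}{2}$. This is exactly the substitution that turns $g-(n-1)k$ into $\frac{(n-1)(m-2-2k)}{2}$, so that identity is automatic once the Hurwitz formula is in place.

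Next I would set up the deformation $f\colon\cC\to\Delta$ obtained by moving the $k$ branch points $q_1,\dots,q_k$ (keeping $q_{k+1},\dots,q_m$ fixed), which is the precise situation of Lemma~\ref{lem:Hori-divisor} with supporting divisor $D=\sum_{i=1}^k (n-1)p_i$, of degree $d=(n-1)k$. To invoke that lemma I must check the hypothesis $H^0(C,T_C(D))=0$, equivalently (by Serre duality) $H^1(C,\omega_C^{\otimes2}(-D))=0$, i.e. $h^0(\omega_C^{\otimes 2}(-D))=3g-3-d$; since $\deg D=(n-1)k<2g-2$ under the assumption $k<\frac{m}{n}$ (using $g=\frac{(n-1)(m-2)}{2}$, one has $2g-2-d=(n-1)(m-2-2k)>0$ when $2k<m-2$, which follows from $k<m/n$ and $n\ge2$... this inequality is the point to verify carefully), the divisor $D$ is non-special-ish and one gets $H^0(C,T_C(D))=0$ provided $D$ is not too big. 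Granting this, Lemma~\ref{lem:Hori-divisor} says $D$ minimally supports $\xi_b$ on the general fibre.

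Then I would verify that $D$ is \emph{relatively rigid}, i.e. $h^0(C_b,\cO_{C_b}(D_b))=1$ for general $b$. Here is where the hypothesis $k<m/n$ does its real work: the key claim is that on a general fibre the degree-$(n-1)k$ divisor $D=\sum (n-1)p_i$ supported on ramification points of the cyclic cover is rigid as long as we have not used up enough branch points to create a $g^1_n$-type pencil by pulling back $\cO_{\mathbb{P}^1}(1)$; concretely $\pi^*\cO_{\mathbb{P}^1}(1)$ has a section vanishing on a fibre $\sum_{j}p_j'$ over a point, and one needs $k$ small enough that $D$ is not contained (even virtually) in such a pencil divisor. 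I expect the cleanest argument is: $D_b$ rigid $\iff$ $D_b$ is not of the form $E+\pi_b^*(\text{pt})$ in the relevant range, and a dimension count (as in the proof of Theorem~\ref{thm:localfam}, the incidence variety $X_{d-1}\times C$ argument, together with the fact that moving $k<m/n$ branch points keeps us in the Zariski-open rigid locus $V_d\subseteq C_b^{(d)}$) shows this holds generically. \textbf{This is the step I expect to be the main obstacle}: making precise why $k<m/n$ (as opposed to merely $d<2g-2$) is the exact threshold for relative rigidity — it is a statement about the special geometry of cyclic covers, not a generic-divisor statement, and it is where the cyclic structure is genuinely used rather than just the numerology.

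Finally, with $f$ minimally supported on a relatively rigid divisor $\cD$ of relative degree $d=(n-1)k$, Lemma~\ref{lem-relrigRanks} applies directly: part~(1) gives $\rk\cK=g-d=g-(n-1)k$, which by the Hurwitz substitution equals $\frac{(n-1)(m-2-2k)}{2}$; and part~(2) gives $\rk\cU\le\frac{g+1}{2}$. For the last sentence, $\cU\subsetneq\cK$ follows as soon as $\rk\cU<\rk\cK$, for which it suffices that $\frac{g+1}{2}<g-(n-1)k$, i.e. $(n-1)k<\frac{g-1}{2}$, i.e. $k<\frac{g-1}{2(n-1)}$; substituting $g=\frac{(n-1)(m-2)}{2}$ turns $\frac{g-1}{2(n-1)}$ into $\frac{mn-2n-m}{4(n-1)}$, giving the stated bound. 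I should also note in passing that non-isotriviality is automatic by Remark~\ref{rmk:non-isotr} since $k\ge1$ forces a non-zero minimal supporting divisor.
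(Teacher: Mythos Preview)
Your overall architecture is exactly the paper's: set $D=\sum_{i=1}^k(n-1)p_i$, check $H^0(C,T_C(D))=0$ so that Lemma~\ref{lem:Hori-divisor} gives minimal support, check $D$ is rigid, and then invoke Lemma~\ref{lem-relrigRanks}. The numerics at the end (Hurwitz, the bound on $k$ for $\cU\subsetneq\cK$) are fine.

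The genuine gap is the rigidity step, and your proposed fix does not work. You suggest a dimension count in the style of Theorem~\ref{thm:localfam}, appealing to the Zariski-open rigid locus $V_d\subseteq C^{(d)}$. But $D=(n-1)\sum p_i$ is \emph{not} a generic divisor of degree $(n-1)k$: it is a very specific divisor concentrated with multiplicity $n-1$ at ramification points, and there is no reason a genericity argument should place it in $V_d$. Likewise, the proposed criterion ``$D$ rigid $\iff$ $D$ not of the form $E+\pi^*(\text{pt})$'' is neither obviously true nor what you need. This is exactly why you correctly flagged the step as the main obstacle---but you have not supplied the missing idea.

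The paper's argument here uses the cyclic-cover structure directly. One enlarges to $D'=\frac{n}{n-1}D=n\sum p_i=\pi^*(q_1+\cdots+q_k)$ and computes, via the projection formula and the standard decomposition $\pi_*\cO_C\cong\bigoplus_{i=0}^{n-1}\cO_{\bP^1}(-i\,m/n)$,
\[
H^0\bigl(C,\cO_C(D')\bigr)\;\cong\;\bigoplus_{i=0}^{n-1}H^0\bigl(\bP^1,\cO_{\bP^1}(k-i\,m/n)\bigr)\;=\;H^0\bigl(\bP^1,\cO_{\bP^1}(k)\bigr),
\]
the summands with $i>0$ vanishing precisely because $k<m/n$. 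Thus every section of $\cO_C(D')$ is pulled back from $\bP^1$, hence any non-constant one has a pole of order exactly $n$ at some $p_i$; since $D<D'$ allows only poles of order $\leq n-1$, one gets $h^0(\cO_C(D))=1$. This is where the hypothesis $k<m/n$ is genuinely consumed, and it is the key computation you are missing. (Your check that $H^0(C,T_C(D))=0$ is essentially the paper's: it follows from $\deg T_C(D)=2-2g+(n-1)k\leq 0$, with the borderline case $n=k=g=2$, $m=6$ handled by the same pushforward trick.)
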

\begin{proof}
For each $i=1,\ldots,k$ let $p_i=\pi^{-1}\left(q_i\right)$ be the ramification point above $q_i$, and set $D=\left(n-1\right)\left(p_1+\ldots+p_k\right)$, the {\em variable ramification divisor}. We will show that $D$ is a rigid divisor that supports $f$ minimally, and Lemma \ref{lem-relrigRanks} gives the final assertions.

In order to show that $D$ is a rigid divisor, let us consider first the divisor $D'=\frac{n}{n-1}D=n(p_1+\ldots+p_k) = \pi^*(q_1+\ldots+q_k)$. It holds then
\begin{multline*}
H^0\left(C,\cO_C\left(D'\right)\right) = H^0\left(C,\pi^*\cO_{\bP^1}\left(q_1+\ldots+q_k\right)\right) \stackrel{\pi^*}{\cong} H^0\left(\bP^1,\cO_{\bP^1}\left(q_1+\ldots+q_k\right)\otimes\left(\pi_*\cO_C\right)\right) \cong \\
\cong \bigoplus_{i=0}^{n-1} H^0\left(\bP^1,\cO_{\bP^1}\left(q_1+\ldots+q_k\right)\otimes\cO_{\bP^1}\left(-i\frac{m}{n}\right)\right) = H^0\left(\bP^1,\cO_{\bP^1}\left(q_1+\ldots+q_k\right)\right),
\end{multline*}
where the last equality follows from the hypothesis $k < \frac{m}{n}$, hence
$$\deg\left(\cO_{\bP^1}\left(q_1+\ldots+q_k\right) \otimes \cO_{\bP^1}\left(-i\frac{m}{n}\right)\right)<0$$
for any $i>0$.

This shows that any meromorphic function in $H^0(C,\cO_C(D'))$ is the pull-back of a meromorphic function in $H^0(\bP^1,\cO_{\bP^1}(q_1+\ldots+q_k))$. In particular, any non-constant function in $H^0(C,\cO_C(D'))$ has poles of order exactly $n$ at some $p_i$, and hence $H^0(C,\cO_C(D)) \subseteq H^0(C,\cO_C(D'))$ consists only of the constant functions, i.e. $D$ is rigid.

It remains to show that $D$ is a minimal supporting divisor of $f$. The genus of $C$ is $g=\frac{(m-2)(n-1)}{2}$, and thus $\deg\left(T_C\left(D\right)\right)\leq0$ with equality if and only if $n=k=g=2$ (hence $m=6$). In this last case an argument along the lines above shows that $H^0\left(C,T_C\left(D\right)\right)=0$. Lemma \ref{lem:Hori-divisor} can thus be applied in any case, giving that $D$ is a minimal rigid supporting divisor.
\end{proof}

We now construct a family of deformations as in Proposition \ref{prop:proj-examples} over a projective base as follows. Fix two points $0,\infty \in \mathbb{P}^1$ and integers $2 \leq n < m$ such that $n \mid m$, and $1 \leq k < \frac{m}{n}$. For each $i=1,\ldots,k$ let $L_i' \subset \mathbb{P}^1 \times \mathbb{P}^1$ be a curve of bidegree $(1,1)$ with $L_i' \cap \{0\}\times\mathbb{P}^1 = \{(0,q_i)\}$, or equivalently, the graph of an automorphism $\phi_i: \mathbb{P}^1 \rightarrow \mathbb{P}^1$ with $\phi_i(0)=q_i$. For $i=k+1,\ldots,m$ let $L_i' = \mathbb{P}^1 \times \{q_i\} \subset \mathbb{P}^1 \times \mathbb{P}^1$. For a general choice of the lines $L_i'$ we can assume that all of them intersect transversely in $k(m-k)+2\binom{k}{2} = k(m-1)$ different points $t_1,\ldots,t_{k(m-1)}$, none of them lying on $M'=\{\infty\}\times\mathbb{P}^1$. In this case the divisor $M'+\sum_{i=1}^m L_i'$ has simple normal crossings. For each $i=1,\ldots,k(m-1)$ let $s_i = \pi_1(t_i)$, where $\pi_1\colon\mathbb{P}^1 \times \mathbb{P}^1 \rightarrow \bP^1$ denotes the projection onto the first factor.

Our idea is to define a degree $n$ cyclic covering of $\mathbb{P}^1 \times \mathbb{P}^1$ branched along the lines $L_i'$, so that the family defined by the projection onto the first $\bP^1$ looks like the deformations in Proposition \ref{prop:proj-examples} around the smooth fibres. By degree considerations we need to introduce branch also over $rM'$, where $r \in \mathbb{Z}_{\geq0}$ is such that $k+r$ is a multiple of $n$. However, the minimal desingularization of such a cyclic covering does not lead to a semistable fibration over $\mathbb{P}^1$. One can indeed check this explicitly from the equations of such a covering, noticing that some non-reduced components appear over the singularities. 

In order to solve this problem, let $\sigma: B \rightarrow \mathbb{P}^1$ be the (normalization of the) cyclic covering of degree $n$ branched over $s_1+\ldots+s_{k(m-1)}+k\infty$, set $\widetilde{\sigma} = \sigma \times \id: B \times \mathbb{P}^1 \rightarrow \mathbb{P}^1\times \mathbb{P}^1$ and define $L_i = \widetilde{\sigma}^* L_i' \subseteq B \times \mathbb{P}^1$. The new divisor $D=\sum_{i=1}^m L_i$ has no longer simple normal crossings, but each singular point has local equations of the form $y(y-x^n)=0$. We can anyway construct the minimal desingularization $Z \rightarrow B \times \mathbb{P}^1$ of the cyclic covering of degree $n$ ramified over $D \subset B \times \bP^1$, and define $f: Z \rightarrow B$ as the induced fibration. Note that in this case no extra ramification is needed. Indeed, $D$ is a section of the line bundle
$$\cL=\widetilde{\sigma}^*\cO_{\bP^1\times\bP^1}(k,m) = \sigma^*\cO_{\bP^1}(k) \boxtimes \cO_{\bP^1}(m),$$
which has an $n$-th root because its bidegree is $(nk,m)$, a multiple of $n$.

We have thus constructed families over a projective base with ranks described by  proposition \ref{prop:proj-examples} as claimed. By Lemma \ref{lem:relrigmonodromy}, we can furthermore conclude that the monodromy of $\cU$ is always finite in these cases.

Summing up, we have the following
\begin{theorem}\label{thm:cyclicexamples}
The fibraton $f: Z \rightarrow B$ constructed above is semistable. The general fibre is a degree $n$ cyclic covering of $\mathbb{P}^1$ ramified over $m$ points (of which $k$ vary with $b \in B$). The genus of a general fibre is $g=\frac{(n-1)(m-2)}{2}$. There are $k(m-1)$ singular fibres, which consist of the transverse union of a curve of genus $g-1$ and an elliptic curve. They have $\\rk \cK = g-(n-1)k = \frac{(n-1)(m-2-2k)}{2}$ and $\cU$ has finite monodromy group and rank at most $\frac{g+1}{2}$.
\end{theorem}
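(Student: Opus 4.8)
This statement packages together several claims, of which the ones concerning $\rk\cK$, $\rk\cU$ and the monodromy of $\cU$ are essentially contained in the discussion preceding the theorem, so I would organise the proof around the genus computation, the determination of the critical fibres, and the local analysis giving semistability. For the ranks: let $\cP_i\subset Z$ ($i=1,\dots,k$) be the section of $f$ traced out by the unique ramification point of the covering over the $i$-th moving branch point, and set $\cD=(n-1)(\cP_1+\dots+\cP_k)$. On a general fibre $\cD$ restricts exactly to the divisor $(n-1)(p_1+\dots+p_k)$ of Proposition~\ref{prop:proj-examples}, which that proposition (through Lemma~\ref{lem:Hori-divisor}) shows to be rigid and to support the fibrewise Kodaira--Spencer class minimally; hence $\cD$ is a relatively rigid minimal supporting divisor for $f$, of relative degree $(n-1)k$. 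Lemma~\ref{lem-relrigRanks} then yields $\rk\cK=g-(n-1)k$ and $\rk\cU\le\frac{g+1}{2}$, while Lemma~\ref{lem:relrigmonodromy} gives that $\cU$ has finite monodromy; and $g-(n-1)k=\frac{(n-1)(m-2-2k)}{2}$ is immediate from $g=\frac{(n-1)(m-2)}{2}$.

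Next I would pin down the generic fibre and the genus. Writing $S'$ for the degree-$n$ cyclic cover of $B\times\bP^1$ branched along $D=\sum_{i=1}^m L_i$, the resolution $Z\to S'$ only modifies the fibres lying over $\sigma^{-1}(\{s_1,\dots,s_{k(m-1)}\})$; over any other $b$ the fibre $Z_b$ is the degree-$n$ cyclic cover of $\{b\}\times\bP^1\cong\bP^1$ branched along $D|_{\{b\}\times\bP^1}=\sum_{i=1}^m\{(\phi_i\circ\sigma)(b)\}$. For general $b$ these $m$ points are distinct; the first $k$ of them move non-trivially with $b$ because $\sigma$ is non-constant and the $\phi_i$ are automorphisms, while the last $m-k$ are the fixed points $q_{k+1},\dots,q_m$. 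So $Z_b$ is a simple degree-$n$ cyclic covering of $\bP^1$ ramified over $m$ points, of which $k$ vary. Since $n\mid m$ the cover is unramified over $\infty$, and Riemann--Hurwitz gives $2g-2=-2n+m(n-1)$, i.e.\ $g=\frac{(n-1)(m-2)}{2}$.

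It remains to treat semistability and the singular fibres, and this is where the substance lies. Each $L_i=\widetilde\sigma^*L_i'$ is the graph of $\phi_i\circ\sigma$, hence a section of the first projection $B\times\bP^1\to B$; therefore $D|_{\{b\}\times\bP^1}$ is non-reduced precisely when two of the $\phi_i\circ\sigma$ coincide at $b$, i.e.\ when $\sigma(b)\in\{s_1,\dots,s_{k(m-1)}\}$, and since $\sigma$ is totally ramified over each $s_j$ this occurs at exactly the $k(m-1)$ points $b_j:=\sigma^{-1}(s_j)$. These are also the only points over which $S'$, and hence $Z$, can fail to be smooth, which already gives the asserted number of singular fibres. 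To establish semistability one works locally near each singular point $\widetilde t_j$ of $D$: there, as recorded in the construction, $S'$ is analytically $\{v^n=y(y-x^n)\}$ with $x$ a local coordinate on $B$, and $f$ is the projection to the $x$-disk. I would resolve this surface singularity explicitly --- via an embedded resolution of the tacnodal plane curve $\{y(y-x^n)=0\}$ (two smooth branches meeting with contact order $n$), followed by the induced cyclic covering and the Hirzebruch--Jung resolution of the cyclic quotient singularities thereby produced --- and then read off $Z_{b_j}$ as the strict transform of $S'_{b_j}$ (which is the normalisation of that fibre of $S'$) together with the exceptional configuration. One checks that the resulting central fibre is reduced with normal-crossings support and the total space is smooth, so $f$ is semistable; tracking the components of the exceptional configuration and of the normalisation of $S'_{b_j}$ identifies $Z_{b_j}$ with the transverse union of a curve of genus $g-1$ and an elliptic curve. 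This local resolution, carried out uniformly in $n$, is the step I expect to be the main obstacle.

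Finally, $B$ is projective because it is a branched cover of $\bP^1$, and $f$ is non-isotrivial because $k\ge1$ forces the minimal supporting divisor $\cD$ above to be non-zero (cf.\ Remark~\ref{rmk:non-isotr}); together with the previous paragraphs this proves the theorem.
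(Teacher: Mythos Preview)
Your proposal is correct and follows the same line as the paper. In fact, the paper presents Theorem~\ref{thm:cyclicexamples} with no separate proof, introducing it with ``Summing up, we have the following'' after the construction in Section~\ref{sec:cyclicexamples}; the rank and monodromy claims are exactly the applications of Proposition~\ref{prop:proj-examples} and Lemmas~\ref{lem-relrigRanks}--\ref{lem:relrigmonodromy} you indicate, and the genus formula is the standard Riemann--Hurwitz count already recorded in Proposition~\ref{prop:proj-examples}. Where you go further than the paper is in outlining the local resolution of $\{v^n=y(y-x^n)\}$ to verify semistability and to read off the components of the singular fibres: the paper simply asserts these facts (noting only that the singularities of $D$ have equation $y(y-x^n)=0$ and invoking the minimal desingularization), so your sketch is supplementary detail rather than a different approach.
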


Our constructions are inspired by a series of examples studied by Caanese and Dettweiler in \cite{CatDet_Vector_2016}, where also degree $n$ cyclic coverings are considered, but ramified only over 4 points (with multiplicities). Although they do a more general analysis, we will here focus on what they call ``standard case'', which have $\gcd(n,6)=1$, three of the branch points have multiplicity one and the fourth one has multiplicity $n-3$. Moving one of the ramification points defines a family over $\bP^1$, which becomes semistable after a degree $n$ covering of the base (and a desingularization). Let $f: S \rightarrow B$ be the resulting fibration. The genus of the smooth fibres is $g=n-1$ and the singular fibres consist of two curves of genus $\frac{n-1}{2}$ meeting transversely in one point. It holds $q(S)=g(B)=\frac{n-1}{2}$, hence $f$ is the Albanese map of $S$. More details can be found in \cite[Section 4]{CatDet_Vector_2016}. These families provide examples where $\cU$ has non finite monodromy group, so they behave very different from ours, where we have seen that the monodromy is finite.

The rank of their flat unitary summand $\cU$ has been studied by Lu in \cite{Lu18}, where arbitrary $n\geq 4$ is are also consiedered, proving the lower bounds
\begin{equation}\label{eq:rk-U-Lu}
\rk\cU\geq
\begin{cases}
\left\lfloor\frac{2g+1}{3}\right\rfloor & \text{ if $n \equiv 1 \mod 3$} \\
\left\lfloor\frac{2g-2}{3}\right\rfloor & \text{ if $n \not\equiv 1 \mod 3$.}
\end{cases}
\end{equation}
With our techniques we are able to prove furthermore the following. 

\begin{proposition}
Let $f:S \rightarrow B$ be as in the ``standard cases'' of \cite{CatDet_Vector_2016}. Then $\mathcal{U}=\mathcal{K}$ and equality holds in \eqref{eq:rk-U-Lu}.
\end{proposition}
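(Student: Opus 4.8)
The plan is to bound $\rk\cK$ from above by means of a minimal supporting divisor and Lemma~\ref{Lem-BoundKer}(2), and to combine this with Lu's lower bound for $\rk\cU$ recalled in \eqref{eq:rk-U-Lu}. Since $\cU\subseteq\cK$, once these two bounds agree we get $\rk\cU=\rk\cK$ with common value the right-hand side of \eqref{eq:rk-U-Lu}; and as $\cK/\cU$ was exhibited above as a vector subbundle of $\cA$, being of rank zero it must vanish, so $\cU=\cK$. Thus everything reduces to identifying a supporting divisor and computing one integer $h^0$ on a general fibre.

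A general fibre $C$ of $f$ is the degree $n$ cyclic covering $\pi\colon C\to\bP^1$ branched over four points, three of multiplicity $1$ and one of multiplicity $n-3$; the family is obtained by moving one of the simple branch points, say $q_1$. Since $\gcd(n,6)=1$ we have $\gcd(n-3,n)=\gcd(3,n)=1$, so $\pi$ is totally ramified over each of the four branch points, and in particular over $q_1$; let $p_1=\pi^{-1}(q_1)$ and set $D=(n-1)p_1$, so $\deg D=n-1=g$ (recall $g=n-1$ here). Because $\deg T_C(D)=2-2g+\deg D=3-n<0$, we get $H^0(C,T_C(D))=0$, and Lemma~\ref{lem:Hori-divisor} shows that $D$ is a \emph{minimal} supporting divisor of $f$ on the general fibre. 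Passing from the family over $\bP^1$ to $f\colon S\to B$ through the base change $B\to\bP^1$ and the desingularization changes neither the Kodaira--Spencer class of a general smooth fibre up to scalar, nor the generic ranks of $\cU$ and $\cK$, so it is enough to work on a general fibre of the original family.

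The only genuine computation is $r(D)=h^0(C,\cO_C(D))-1$. Using the $\bZ/n$-action on $C$, the space $H^0(C,\cO_C(D))$ splits into eigenspaces, the $\zeta_n^j$-eigenspace consisting of the functions $g(x)y^j$ (with $y^n=F(x)$ the covering equation and $\zeta_n$ a primitive $n$-th root of unity) that lie in $H^0(\cO_C(D))$. Imposing regularity of $g(x)y^j$ away from $p_1$ and a pole of order at most $n-1$ at $p_1$ — using $\operatorname{div}(y)=p_1+p_2+p_3+(n-3)p_4-\sum_{l}\infty_l$ and that $\pi$ is unramified over $\infty$ — one identifies this eigenspace with $H^0\bigl(\bP^1,\cO_{\bP^1}(a_1+\lfloor j(n-3)/n\rfloor\, a_4-j\,\infty)\bigr)$ for $j\ge1$, of dimension $\max\{0,\,2-\lceil 3j/n\rceil\}$, and with the constants for $j=0$. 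Hence only the indices $0\le j\le\lfloor n/3\rfloor$ contribute, each a one-dimensional space, so $h^0(C,\cO_C(D))=1+\lfloor n/3\rfloor$ and $r(D)=\lfloor n/3\rfloor$. Therefore $\Cliff(D)=\deg D-2r(D)=g-2\lfloor n/3\rfloor$, and Lemma~\ref{Lem-BoundKer}(2), applied to the minimal supporting divisor $D$ on a general fibre where $\cK\otimes\bC(b)=\ker(\cup\xi_b)$, gives
$$\rk\cK\ \le\ g-\Cliff(D)\ =\ 2\lfloor n/3\rfloor.$$

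It remains to match $2\lfloor n/3\rfloor$ with the right-hand side of \eqref{eq:rk-U-Lu}. Writing $n=3t+1$ or $n=3t+2$ (recall $3\nmid n$), one has $g=3t$ and $2\lfloor n/3\rfloor=2t=\lfloor(2g+1)/3\rfloor$ in the first case, and $g=3t+1$ and $2\lfloor n/3\rfloor=2t=\lfloor(2g-2)/3\rfloor$ in the second. Combining this with Lu's inequality \eqref{eq:rk-U-Lu} and with $\cU\subseteq\cK$ we obtain
$$2\lfloor n/3\rfloor\ \le\ \rk\cU\ \le\ \rk\cK\ \le\ 2\lfloor n/3\rfloor,$$
whence $\rk\cU=\rk\cK$ equals the right-hand side of \eqref{eq:rk-U-Lu} and, as explained in the first paragraph, $\cU=\cK$. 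The main obstacle in carrying this out is the evaluation of $h^0(C,\cO_C((n-1)p_1))$ on the cyclic cover; the remaining steps are routine applications of Lemmas~\ref{lem:Hori-divisor} and \ref{Lem-BoundKer}, the Riemann--Hurwitz bookkeeping already used in the proof of Lemma~\ref{lem-relrigRanks}, and the elementary comparison of floor functions above.
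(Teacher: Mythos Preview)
Your proof is correct and follows essentially the same strategy as the paper: apply Lemma~\ref{lem:Hori-divisor} to the minimal supporting divisor $D=(n-1)p_1$, use Lemma~\ref{Lem-BoundKer}(2) to bound $\rk\cK\le 2r(D)$, and match with Lu's lower bound \eqref{eq:rk-U-Lu}. The only cosmetic difference is in computing $r(D)$: the paper first reduces to $r(nP)$ via the observation $r((n-1)P)=r(nP)-1$ and then applies the Esnault--Viehweg pushforward formula for $\pi_*\cO_C$, whereas you compute $h^0(\cO_C((n-1)p_1))$ directly by the eigenspace decomposition---both yield $r(D)=\lfloor n/3\rfloor$ and are essentially the same calculation.
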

\begin{proof}
Since $\cU\subseteq\cK$, we only have to prove $\rk\mathcal{K}\leq\left\lfloor\frac{2g+1}{3}\right\rfloor$ if $n \equiv 1 \mod 3$, and $\rk\mathcal{K}\leq\left\lfloor\frac{2g-2}{3}\right\rfloor$ otherwise.

By construction, $f$ is a family obtained by moving one branch point of a morphism from a general fibre to $\bP^1$. So we can apply Lemma \ref{lem:Hori-divisor} to obtain a minimally supporting divisor $\cD=(n-1)\cP \subseteq S$, where $\cP$ is the section defined by moving the branch point. By Lemma \ref{Lem-BoundKer} we have that
$$\rk\mathcal{K}\leq g-\deg D+2r(D) = 2r(D),$$
where $D=\cD_{|C}=(n-1)P$ is the restriction to a general fibre $C$ of $f$. In order to compute $r(D)= r((n-1)P)$ note first that $r((n-1)P)= r(nP)-1$ because $P$ is not a base point of $|nP|$. Indeed, since $nP=\pi^*Q$ for some $Q \in \bP^1$, the pull-back of any other $Q' \in \bP^1$ is a divisor linearly equivalent to $nP$ not contaning $P$. Secondly, since $C \rightarrow \bP^1$ is a morphism of degree $n$ ramified over a divisor of the form $R=(n-3)0+1+Q+\infty$, i.e. also of degree $n$, we can apply \cite[Corollary 3.11]{EsnVie} with $\cL=\cO_{\bP^1}(1)$ and obtain
$$\pi_*\cO_C = \bigoplus_{i=0}^{n-1}\cO_{\bP^1}\left(-i+\left\lfloor\frac{i}{n}R\right\rfloor\right) = \bigoplus_{i=0}^{n-1}\cO_{\bP^1}\left(-i+\left\lfloor\frac{i(n-3)}{n}\right\rfloor\right) = \bigoplus_{i=0}^{n-1}\cO_{\bP^1}\left(\left\lfloor\frac{-3i}{n}\right\rfloor\right).$$
This implies
\begin{multline*}
h^0(\cO_{C}(nP))=h^0(\pi^*\cO_{\bP^1}(1))=h^0(\pi_\ast\pi^\ast\cO_{\bP^1}(1))=h^0(\cO_{\bP^1}(1)\otimes \pi_\ast \cO_{C}) = \\
= \sum_{i=0}^{n-1} h^0\left(\cO_{\bP^1}\left(1+\left\lfloor\frac{-3i}{n}\right\rfloor\right)\right) = \left\lfloor\frac{n}{3}\right\rfloor+2,
\end{multline*}
and thus $r(D) = r(nP)-1 = \left\lfloor\frac{n}{3}\right\rfloor$. Explicitly writing $\left\lfloor\frac{n}{3}\right\rfloor$ in each case leads to the desired upper-bound for $\rk \cK$.
\end{proof}

\section{Proof of the main theorems}\label{sec:pf:main}

In this section we give the proof of Theorem \ref{thm:main} and Corollary \ref{cor:main}. 

\begin{proof}[Proof of  Theorem \ref{thm:main}]
We say that a ramified covering $\pi\colon C\to\bP^1$ is {\em simply ramified} at $p\in\bP^1$ if $p$ is the only ramification point on its fibre and its ramification index is 2.

We first show that for any subset $\left\{p_1,\ldots,p_g\right\}$ of $g$ distinct points of $C$ there is a covering $\pi\colon C\to\bP^1$ simply ramified at each $p_1,\ldots,p_g$. To this aim, we fix an embedding $C\hookrightarrow\bP^n=\bP\left(H^0\left(C,L\right)\right)$ given by a complete linear system $|L|$ of degree $d\geq5g+3$ and consider morphisms $\pi_H\colon C\to\bP^1$ given by projection from a linear subspace $H\subset\bP^n$ of codimension $2$ and disjoint from $C$.

The condition on the degree assures that $h^0\left(C,L\left(-D\right)\right)=h^0\left(C,L\right)-\deg D$ for any effective divisor $D$ with $\deg D\leq3g+2$. In particular, for any $p\in C$ the tangent line $L_p=T_pC$ and the osculating plane $\Pi_p$ are given by
$$L_p=\bP\left(H^0\left(C,L\left(-2p\right)\right)^{\perp}\right)\cong\bP^1\qquad\text{and}\qquad\Pi_p=\bP\left(H^0\left(C,L\left(-3p\right)\right)^{\perp}\right)\cong\bP^2,$$
where $\perp$ denotes the annihilator inside $H^0\left(C,L\right)^{\vee}$. Moreover, for any distinct $p_1,\ldots,p_g,p\in C$ the osculating planes $\Pi_{p_1},\ldots,\Pi_{p_g}$ and the tangent line $L_p$ are independent, in the sense that the linear span
$$\left\langle\Pi_{p_1},\ldots,\Pi_{p_g},L_p\right\rangle\subset\bP^n$$
has dimension $2g+1$, the maximal possible.

Consider now a linear subspace $H\subset\bP^n$ of codimension 2 and disjoint from $C$. Then $\pi_H$ is ramified at $p$ if and only if $L_p\subseteq\left\langle p,H\right\rangle$, (i.e. if $L_p\cap H\neq\emptyset$) and the ramification index is exactly $2$ if and only if $\Pi_p\not\subset\left\langle p,H\right\rangle$ (i.e. if $\Pi_p\cap H=L_p\cap H$). On the other hand, it holds $\pi_H\left(p\right)=\pi_H\left(q\right)$ for $p\neq q$ if and only if $\left\langle p,H\right\rangle=\left\langle q,H\right\rangle$, or equivalently $H$ intersects the line $\overline{pq}$.

Let now $p_1,\ldots,p_g\in C$ be arbitrary distinct points and for each $i=1,\ldots,g$ pick $q_i\in L_{p_i}, q_i\neq p_i$. It is now easy to show that the set of codimension-$2$ linear subspaces $H$ containing $q_1,\ldots,q_g$ and such that $\pi_H$ is ramified at each $p_i$ with index $2$ and $\pi_H\left(p_i\right)\neq\pi_H\left(p_j\right)$ for $i\neq j$ form a Zariski-open subset of the Grassmannian $\bG$ of codimension-$2$ subspaces containing the $q_1,\ldots,q_g$. It remains to achieve the simple ramification at each $p_1,\ldots,p_g$, i.e. no other ramification point has the same image as any $p_i$. By the above discussion, the covering $\pi_H$ is ramified at another given point $p\in C$ if and only $L_p\cap H\neq\emptyset$, which is a codimension-$2$ condition on $\bG$ (because of the condition $\deg L\geq 5g+3$). By moving $p\in C$ we see that the set of ``bad'' subspaces $H$ (such that $\pi_H$ is not simply ramified at $p_1,\ldots,p_g$) has codimension at least $1$ in $\bG$, hence a general $H\in\bG$ defines a covering simply ramified at $p_1,\ldots,p_g$, as wanted.
 
Suppose now in addition that the points $p_1,\ldots,p_g$ form a rigid divisor on $C$ (which happens for a set of $g$ points in general position on $C$) and pick a covering $\pi\colon C\to\bP^1$ as above, simply ramified at $p_1,\ldots,p_g$. Denote by $b_1=\pi\left(p_1\right),\ldots,b_g=\pi\left(p_g\right),b_{g+1},\ldots,b_k$ the branch points of $\pi$. To finish the proof we construct a one-dimensional family $f\colon\mathcal{C}\rightarrow B$ of deformations of $C$ over a quasi-projective base $B$, moving $r$ of the branch points $b_1,\ldots,b_g$ as follows.

For $i=1,\ldots,r$ let $\Delta_i\subset\bP^1$ be a disk centred in $b_i$, small enough so that $\Delta_i\cap\Delta_j=\emptyset$ for $i\neq j$, and also $b_j\not\in\Delta_i$ for $i=1,\ldots,r$ and $j=r+1,\ldots,k$. By the Riemann-existence theorem, for any $t=(t_1,\ldots,t_r)\in \prod_{i=1}^r\Delta_i=\Delta$ there is a covering $\pi_t:C_t\to \bP^1$ branched on $\{t_1,t_2,\dots,t_r,q_{r+1},\dots,q_k\}$ with the same ramification data as $\pi$. These coverings vary holomorphically over the polydisk $\Delta$, and thus define an $r$-dimensional family $f:\cC\to \Delta$ with $f^{-1}(t)=C_t$. Because of monodromy reasons, this family might not extend automatically over the quasi-projective variety $X=(\bP^1)^r\setminus Z$, where
$$Z=\{(t_1,\ldots,t_r)\in(\bP^1)^r\,\mid\,t_i=t_j\,\text{ or }\,t_i=b_j \,\text{ for some } i\neq j\}$$
is the set where two branch points collide. We can anyway extend it over any simply connected open set containing $\Delta$ and so in particular over the universal covering $\psi:\tilde{X}\to X$, which is however not quasi-projective.

Nevertheless, for given $t\in X$ there are only finitely many coverings (up to isomorphism) branched over $\{t_1,t_2,\dots,t_r,b_{r+1},\dots,b_k\}$, and the fundamental group $\pi_1\left(X\right)$ acts naturally on this finite set. The kernel $G$ of the induced group homomorphism $\rho:\pi_1(X)\to \Sigma_N$ into the symmetric group $\Sigma_N$ (for some appropriate $N$) has therefore finite index in $\pi_1\left(X\right)$ and is independent of $t\in X$ general. The family over $\tilde{X}$ induces thus a family $f\colon\cC\to Y$ over $Y=\tilde{X}/G$, which is a finite covering of the quasi-projective variety $X$, hence quasi-projective itself.

To finish the proof we consider a quasi-projective curve $B\subset Y$ through a point $t_0$ of $Y$ above $(b_1,\ldots,b_r)\in X$ corresponding to $\pi$, and transverse to the ``coordinate hypersurfaces'' $\{t_i=b_i\}$. Possibly after a finite base change this family can be extended to a semistable one over a projective base. The fact that $\rk\cK=g-r$ follows directly from Lemmas \ref{lem:Hori-divisor} and \ref{Lem-BoundKer}. Indeed, Lemma \ref{lem:Hori-divisor} shows that for $t_0\in B$ the infinitesimal deformation is minimally supported on $D=p_1+\ldots+p_r$, hence in particular is not isotrivial (see Remark \ref{rmk:non-isotr}). By construction of $\pi$, the divisor $D$ is rigid and Lemma \ref{lem-relrigRanks} gives both $\rk\cK=g-r$ and $\rk\cU\leq\frac{g+1}{2}$.
\end{proof}

\begin{proof}[Proof of Corollary \ref{cor:main}] The proof is a straightforward application of Theorem \ref{thm:main} together with the following argument about the monodromy of the flat summand. Since $C$ is a smooth curve with simple Jacobian variety $J\left(C\right)$, the flat bundle $\cU$ of any one-dimensional family $f:\cC\to B$ through $C$ must  be either zero or have infinite monodromy. Otherwise, $\cU$ would become trivial after a finite \'etale base change, defining an abelian subvariety of $J\left(C\right)$ and contradicting its simplicity. However the family $f:\cC\to B$ as contructed in the proof of Theorem \ref{thm:main} is minimally supported on a relatively rigid divisor. Lemma \ref{lem:relrigmonodromy} implies that $\cU$ has finite monodromy, hence it must be zero by the above discussion.
\end{proof} 

\bibliographystyle{alpha}

%\bibliography{mybib201?}

\end{document}